\documentclass[11pt]{article}
\usepackage[margin=1.2in]{geometry}
\usepackage[maxbibnames=99, style=alphabetic, maxalphanames=99, minalphanames=6, 
sorting=nyt, giveninits=true]{biblatex}
\AtEveryBibitem{
    \clearfield{issn}
  \clearfield{isbn}
  \clearfield{doi}
  \clearlist{location}
  \clearfield{month}
  \clearfield{day}
  \clearfield{series}
  \clearlist{language}
  \clearlist{translator}
  \renewbibmacro{in:}{}
}
\DeclareLabelalphaNameTemplate{
  \namepart[use=true, pre=true, strwidth=1, compound=true]{prefix}
  \namepart[compound=true]{family}
}

\usepackage{amssymb, mathtools, amsthm, thmtools}
\usepackage{mathdots}
\usepackage{graphicx}
\usepackage{microtype}
\usepackage{accents}
\usepackage{enumitem}
\usepackage{indentfirst}
\usepackage{xcolor}
\usepackage{tikz-cd}
\usepackage{csquotes}
\usepackage[colorlinks=true, linkcolor=blue]{hyperref} 
\usepackage[noabbrev]{cleveref}
\usepackage{bookmark}
\usepackage{authblk}

\theoremstyle{definition}
\declaretheorem[name=Definition,refname={definition,definitions},Refname={Definition,Definitions},numberwithin=section]{definition}

\theoremstyle{plain}
\declaretheorem[name=Lemma,refname={lemma,lemmas},Refname={Lemma,Lemmas},sibling=definition]{lemma}
\declaretheorem[name=Proposition,refname={proposition,propositions},Refname={Proposition,Propositions},sibling=definition]{prop}
\declaretheorem[name=Theorem,refname={theorem,theorems},Refname={Theorem,Theorems},sibling=definition]{thm}
\declaretheorem[name=Corollary,refname={corollary,corollaries},Refname={Corollary,Corollaries},sibling=definition]{corollary}
\theoremstyle{remark}
\declaretheorem[name=Remark,refname={remark,remarks},Refname={Remark,Remarks},numbered=no]{rmk}

\newcommand{\GL}{\mathrm{GL}}
\newcommand{\numberset}{\mathbb}

\newcommand{\C}{\numberset{C}}

\newcommand{\Z}{\numberset{Z}}
\newcommand{\N}{\numberset{N}}
\newcommand{\G}{\numberset{G}}
\newcommand{\git}{/\!\!/}

\newcommand{\coulomb}[2]{\mathcal{C}_{{#1},{#2}}} 

\DeclareMathOperator{\id}{Id}
\DeclareMathOperator{\im}{im}

\newcommand{\gl}{\mathfrak{gl}}

\DeclareMathOperator{\Hom}{Hom}
\DeclareMathOperator{\spec}{Spec}

\DeclareMathOperator{\diag}{diag}
\DeclareMathOperator{\Ad}{Ad}
\DeclareMathOperator{\Mat}{Mat}
\DeclareMathOperator{\End}{End}

\author{Bruno da Silveira Dias}
\title{Rankin-Selberg duality via gluing}
\date{\today}

\begin{document}
	
	\maketitle
    \begin{abstract}
        We use a gluing procedure introduced by Ginzburg to describe the relative Langlands dual of the hyperspherical Hamiltonian $(\GL_n \times \GL_m)$-variety $T^*(\Hom(\C^m,\C^n))$, and in particular the Rankin-Selberg case $m=n$. We show that the dual is isomorphic to the triangle part of Cherkis-Nakajima-Takayama bow varieties, recovering a result of Nakajima. Following a suggestion of Ginzburg, we explain how to modify the gluing so that the dual Hamiltonian variety of $T^*\mathbf{N}$, for any finite-dimensional representation $\mathbf{N}$ of a complex reductive group $G$, is naturally equipped with an anti-symplectic involution, and give an explicit formula for this involution in the Rankin-Selberg case.
    \end{abstract}

    \section{Introduction}\label{section-intro}
        
        Let $G = \GL_n \times \GL_m$ act on $\mathbf{N} = \Hom(\C^m,\C^n)$ by $(g,h) \cdot a = gah^{-1}$. The cotangent bundle $T^*\mathbf{N}$ is a hyperspherical Hamiltonian $G$-variety in the sense of \cite{benzvi2024relativelanglandsduality}. It is also one of the basic building blocks, along with the triangle part recalled below, used in the quiver description of Cherkis-Nakajima-Takayama bow varieties \cite{NakajimaTakayama2017}, which are built from these by Hamiltonian reduction. In this paper, we use a gluing procedure introduced in \cite{ginzburg2025pointwisepurityderivedsatake} to describe the relative Langlands dual of $T^*\mathbf{N}$, defined by means of ring objects in the derived Satake category as in \cite[Section 5]{BFN-Ring}, \cite[Section 8]{benzvi2024relativelanglandsduality} (see also \cite{BF2019} and \cite{nakajima2024sdualhamiltonianmathbfg}).\footnote{
            Explicitly, the relative Langlands dual of $T^*\mathbf{N}$ for a smooth affine $G$-variety $\mathbf{N}$ is defined as the spectrum of $H^*_{G(\mathcal{O})}(\mathrm{Gr}_G, \mathcal{A}_{G,\mathbf{N}} \otimes^! \mathcal{R})$, where $\mathcal{A}_{G,\mathbf{N}}$ is the ring object in $D_{G(\mathcal{O})}(\mathrm{Gr}_G)$ associated with $\mathbf{N}$, and $\mathcal{R}$ is the regular sheaf. It was pointed out to us by Sakellaridis that there are other versions of relative Langlands duality for $\mathbf{N} = \Hom(\C^m,\C^n)$, cf.\ \cite{MWZ26}, which are different from the one used here.
        }

        Recall the definition of the \emph{triangle part}, introduced by Takayama in \cite{Takayama2016}. We fix a dimension vector $(m,n) \in \N^2$, and let $\mathcal{M} := \mathcal{M}_{(m,n)}$ be the set of quintuples of linear maps $(x,y_1,y_2,v,u)$ as pictured in the diagram
        \[
            \begin{tikzcd}[column sep = small]
                \C^n  \arrow[out=120,in=60,loop,looseness=5,"y_1"] & & \C^m \arrow[ll, "x"']\arrow[out=120,in=60,loop,looseness=5,"y_2"] \arrow[ld, "u"] \\
                & \C \arrow[lu, "v"]& 
            \end{tikzcd}
        \]
        satisfying the equation
        \begin{align*}
            & y_1x-xy_2+vu=0,
        \end{align*}
        and the stability conditions:
        \begin{itemize}
            \item There is no $y_1$-invariant proper subspace $S \subseteq \C^n$ such that $S \supseteq \im(x)+\im(v)$;
            \item There is no $y_2$-invariant non-zero subspace $S \subseteq \C^m$ such that $S \subseteq \ker(x) \cap\ker(u)$.
        \end{itemize}
        As shown by Nakajima and Takayama in \cite[Section 3.1]{NakajimaTakayama2017}, based on results by Hurtubise \cite{Hurtubise1989}, Bielawski \cite{Bielawski1997}, and Takayama \cite{Takayama2016}, $\mathcal{M}$ is a smooth affine symplectic algebraic variety. We view $\mathcal{M}$ as a hyperspherical Hamiltonian $(\GL_n \times \GL_m)$-variety via the action
        \begin{equation}
            (g,h) \cdot (x,y_1,y_2,v,u) = (gxh^T,gy_1g^{-1},(h^{T})^{-1}y_2h^T,gv,uh^{T}) \label{eq-correctaction}
        \end{equation}
        where $(-)^T$ denotes matrix transposition. This differs from the action considered in \cite{NakajimaTakayama2017} by an outer automorphism.
        
        Our main result is the following: 
        
        \begin{thm}\label{thm-main}
            The relative Langlands dual of the hyperspherical $(\GL_n \times \GL_m)$-variety $T^*(\Hom(\C^m,\C^n))$ is $\mathcal{M}_{(m,n)}$, viewed as a $(\GL_n \times \GL_m)$-variety via the action \eqref{eq-correctaction}.
        \end{thm}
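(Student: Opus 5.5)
The proof will run through Ginzburg's gluing description of the dual of $T^*\mathbf{N}$ from \cite{ginzburg2025pointwisepurityderivedsatake}. Fix a Cartan $T\subset G=\GL_n\times\GL_m$ with Weyl group $W$. Set $\mathfrak{t}^\vee_{\mathrm{reg}}$-generic loci aside for the moment. The dual $\mathcal{M}^\vee$ is obtained by gluing two pieces over the regular centralizer group scheme $J_{G^\vee}\to \mathfrak{t}^\vee/W$:
\begin{itemize}
\item the universal centralizer part $G^\vee\times \mathcal{S}$ (Kostant slice), and
\item the abelian part, namely the BFN Coulomb branch $\coulomb{T}{\mathbf{N}}$ of the torus.
\end{itemize}
The gluing is along the open locus where the abelian Coulomb branch is identified with $J_T$ after localization. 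Equivalently, $\mathbb{C}[\mathcal{M}^\vee]$ is the ring of $W$-invariant, $J$-compatible functions on $G^\vee\times_{N^\vee}(\ldots)$ that extend over the root hyperplanes. Concretely, I will use the characterization
\[
\mathbb{C}[\mathcal{M}^\vee]\cong\bigl(\mathbb{C}[G^\vee]\otimes \mathbb{C}[\coulomb{T}{\mathbf{N}}]\bigr)^{J_T}
\]
cut out by extension over codimension one, and describe $\coulomb{T}{\mathbf{N}}$ explicitly. For $T=(\C^*)^{n+m}$ acting on $\Hom(\C^m,\C^n)$ with weights $\epsilon_i-\delta_j$, $\coulomb{T}{\mathbf{N}}$ is the hypertoric-type variety generated by $w_i,\lambda_j$ and $r^{\pm}$-monopoles with relations $r^+_\gamma r^-_\gamma=\prod_{i,j}(w_i-\lambda_j)^{|\langle\gamma,\epsilon_i-\delta_j\rangle|}$.

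I then carry out the following steps in order.

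\textbf{Step 1: an equivariant map to $\mathcal{M}$ over generic parameters.} I construct a $(\GL_n\times\GL_m)$-equivariant map from the glued variety to $\mathcal{M}_{(m,n)}$. Over the locus where $y_1,y_2$ are regular semisimple with disjoint spectra, the moment equation $y_1x-xy_2+vu=0$ can be solved for $x$. The stability conditions then force $v$ and $u$ to be cyclic vectors. After diagonalizing, a point of $\mathcal{M}$ is determined by
\begin{itemize}
\item $(g,h)\in G^\vee$ modulo the torus,
\item eigenvalues $(w_i),(\lambda_j)$, and
\item the products $v_iu_j$, subject to $x_{ij}=v_iu_j/(\lambda_j-w_i)$.
\end{itemize}
This matches the abelian Coulomb branch localized at $\prod(w_i-\lambda_j)\neq0$. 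I check that the induced actions agree with \eqref{eq-correctaction}; the transpose there accounts for the fact that the action of $h$ on the $\C^m$ side appears dually. I also check that the moment maps $\mu=(y_1,-y_2)$, up to the transpose twist, correspond to the projections to $\mathfrak{t}^\vee/W$.

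\textbf{Step 2: extension over divisors.} I verify that both sides are normal affine varieties and that the map is an isomorphism away from codimension two. For this I check each divisor where
\begin{itemize}
\item $w_i=w_{i'}$,
\item $\lambda_j=\lambda_{j'}$, or
\item $w_i=\lambda_j$.
\end{itemize}
On each divisor a rank-one reduction applies. For the root divisors of $\GL_n$ or $\GL_m$ this is Ginzburg's $\GL_2$/$\mathrm{SL}_2$ computation for $J$-gluing. For the mixed divisor $w_i=\lambda_j$ it is the $(m,n)=(1,1)$ triangle $\{(x,y_1,y_2,v,u): (y_1-y_2)x+vu=0\}$, which is exactly $\coulomb{\C^*\times\C^*}{\C}$. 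Smoothness and affineness of $\mathcal{M}$, cited from Nakajima--Takayama, then give normality on that side. The glued side is normal by construction. Hartogs then yields the isomorphism.

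\textbf{Main obstacle.} I expect the hardest step to be the codimension-two bookkeeping in Step 2. One has to show that $\mathcal{M}$ has no extra functions beyond those predicted by the divisor-wise analysis. One also has to show that the map from $\mathcal{M}$ lands in the glued variety at points where $y_1$ and $y_2$ share eigenvalues with nontrivial Jordan blocks. There I would use the stability conditions, in the form of cyclicity of $(y_1,\im x+\im v)$ and of $(y_2^T,\ker x\cap\ker u)^\perp$, to produce a Kostant-slice normal form. I would also compare with Nakajima's identification of $\mathcal{M}$ as a Coulomb branch to confirm the identification of coordinate rings.
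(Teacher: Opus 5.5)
Your proposal has a genuine gap, and it lies exactly where you placed the ``main obstacle''.

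First, the model of the dual you compare against is not well defined. The expression $\bigl(\C[G^\vee]\otimes\C[\coulomb{T}{\mathbf{N}}]\bigr)^{J_T}$ ``cut out by extension over codimension one'', together with $G^\vee\times_{N^\vee}(\ldots)$, is not a precise statement. Moreover, BFN-type abelianization describes the Coulomb branch $\coulomb{G}{\mathbf{N}}$, i.e.\ the Whittaker reduction of the dual, not the dual itself. What \cite[Theorem 3.2.4]{ginzburg2025pointwisepurityderivedsatake} actually provides is $\mathcal{A}_\sigma$: the functions on $\mathrm{Wh}_{\Check{G}}$ that stay regular after pullback by $\sigma_\rho(g,h,C_p,C_q)=(gq(C_p),(-1)^nhp(C_q)^{-1},C_p,C_q)$. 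Since $q(C_p)$ and $p(C_q)$ are invertible exactly when $\mathrm{Res}(p,q)\neq 0$, all the content of the gluing sits over the resultant divisor. Your Step~1 comparison over regular semisimple points with disjoint spectra therefore says nothing about it.

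Second, for Hartogs you need an open subset of $\mathcal{M}$, with complement of codimension at least two, on which the identification is already established. A divisor-by-divisor rank-one analysis over $\mathfrak{c}_n\times\mathfrak{c}_m$ gives this only if you also control the fibres of $\mathcal{M}\to\mathfrak{c}_n\times\mathfrak{c}_m$ over codimension-two strata (shared eigenvalues, nontrivial Jordan blocks). You leave exactly this open. Falling back on Nakajima's identification would just be the other proof mentioned in the introduction. Also, your rank-one model $\{(y_1-y_2)x+vu=0\}$ drops stability. Without it you get a singular quadric, whereas $\mathcal{M}_{(1,1)}$ is its locus $x\neq 0$, which is $\cong\C^\times\times\C^3$.

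The missing idea is to bypass the generic locus and build the identification on two charts defined over all of $\mathrm{Wh}_{\Check{G}}$: $\mathcal{M}_1=\{v \text{ cyclic for } y_1\}$ and $\mathcal{M}_2=\{u \text{ cyclic for } y_2^T\}$.
\begin{itemize}
\item Cyclicity of $v$ alone, with no semisimplicity or disjointness of spectra, gives the normal form $y_1=gC_pg^{-1}$, $v=ge_1$.
\item The equation $y_1x-xy_2+vu=0$ and the second stability condition then force $\xi=e_n^Tg^{-1}x$ to be a cyclic covector for $y_2$, with $u=\xi p(y_2)$. This produces $h$, and then $x=gK_{p,q}h^T$ and $u=f_1^Tp(C_q^T)h^T$.
\item Here $K_{p,q}$ is the Bezoutian-type matrix of the paper, solving $C_pK-KC_q^T=-e_1f_1^Tp(C_q^T)$ even when $p$ and $q$ have common roots.
\item This yields $\Phi_1\colon\mathrm{Wh}_{\Check{G}}\xrightarrow{\sim}\mathcal{M}_1$, and likewise $\Phi_2$ via $L_{p,q}$. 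The identity $q(C_p)K_{p,q}=-L_{p,q}p(C_q^T)$ gives $\Phi_2=\Phi_1\circ\sigma_\rho$.
\end{itemize}
Because $\mathcal{M}\setminus(\mathcal{M}_1\cup\mathcal{M}_2)$ is the intersection of the two divisors where the cyclicity determinants vanish, Hartogs applies at once. No root-divisor or Levi-type analysis is needed.
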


        When $m=n$, the triangle part $\mathcal{M}$ is isomorphic to $T^*(\GL_n \times \C^n)$ viewed as a hyperspherical Hamiltonian $(\GL_n \times \GL_n)$-variety via the cotangent lift of the action on $\GL_n \times \C^n$ by $(g,h) \cdot (a,v) = (gah^T,gv)$. \Cref{thm-main} is then the \emph{Rankin-Selberg} case of relative Langlands duality: the dual of $G = \GL_n \times \GL_n \curvearrowright T^*\gl_n$ is $\Check{G} = \GL_n \times \GL_n \curvearrowright T^*(\GL_n \times \C^n)$. For $m<n$, $\mathcal{M}_{(m,n)}$ is isomorphic to a \emph{hyperspherical equivariant slice} $\GL_n \times \mathcal{S}_e$ corresponding to a nilpotent element $e \in \gl_n$ of Jordan type $(n-m, 1^m)$, as in \cite{Finkelberg_2025}, and analogously with the roles of $m$ and $n$ reversed if $n<m$.

        It has been observed by Nakajima in \cite{nakajima2024sdualhamiltonianmathbfg} that \Cref{thm-main} follows from the identification of the Coulomb branch of quiver gauge theories of affine type $A$ with bow varieties, the main result of \cite{NakajimaTakayama2017}, combined with \cite[Theorem 2.11]{BFN-Ring}. Our proof of \Cref{thm-main} is much more straightforward, relying instead on a gluing procedure introduced by Ginzburg in \cite{ginzburg2025pointwisepurityderivedsatake}, building on work of Teleman \cite{Teleman2021} and Gannon-Webster \cite{gannon2025functorialitycoulombbranches}. 

        This paper is organized as follows: in \Cref{section-prelim}, we review the gluing procedure of Ginzburg and compute the Teleman section of the representation $\Hom(\C^m,\C^n)$. In \Cref{section-bezout}, we briefly discuss Bezoutian matrices and derive some identities that play an important role in our proof of the main theorem, which is given in \Cref{section-mainthm}. In \Cref{section-involution}, following an approach suggested by Ginzburg, we explain how the gluing can be modified so that the resulting Hamiltonian space is naturally equipped with an anti-symplectic involution. We then give a formula for this involution in the Rankin-Selberg case $m=n$.

        \subsection*{Acknowledgments}

        I am deeply grateful to my advisor, Victor Ginzburg, for suggesting this problem to me, for the many helpful discussions, and for reading and making several useful suggestions on preliminary drafts. I also thank Michael Finkelberg, Tom Gannon, and Tsao-Hsien Chen for providing interesting comments on an early draft. I am thankful to Yiannis Sakellaridis for detailed and enlightening observations. 

    \section{Preliminaries}\label{section-prelim}

        \subsection{Relative Langlands dual via gluing}\label{subsection-gluingoutline}
        
        Let $G$ be a reductive group, $\Check{G}$ its Langlands dual. We write $T$, resp.\ $\Check{T}$, for a maximal torus of $G$, resp.\ $\Check{G}$, $\mathfrak{t} := \mathrm{Lie}(T)$, resp.\ $\Check{\mathfrak{t}} := \mathrm{Lie}(\Check{T})$, and $W$ for the Weyl group. Let $\mathfrak{c} := \Check{\mathfrak{g}}^* \git \Check{G} \cong \Check{\mathfrak{t}}^*/W$ be the coadjoint quotient, and $J_{\Check{G}} \to \mathfrak{c}$ the group scheme of universal centralizers.
        
        To a representation $\rho\colon G \to \GL(\mathbf{N})$, Teleman \cite{Teleman2021} associated a $W$-invariant rational map $\epsilon_\rho\colon \Check{\mathfrak{t}}^* \dashrightarrow \Check{T}$. Explicitly, this is given as follows (see also \cite[Section 2.1]{gannon2025functorialitycoulombbranches}): let $\lambda_1,\dots,\lambda_d$ be the non-zero weights of $\rho$, viewed as cocharacters $\G_m \to \Check{T}$. For each $i$, we write $\mathrm{d}\lambda_i$ for the linear function on $\mathfrak{t}$ defined by the differential of $\lambda_i$. Then each non-zero weight $\lambda_i$ gives rise to a rational map $\mathfrak{t} = \Check{\mathfrak{t}}^* \dashrightarrow \Check{T}$, $\xi \mapsto \lambda_i(\mathrm{d}\lambda_i(\xi))$, and $\epsilon_\rho$ is defined as their product:
        \begin{equation}\label{eq-telemansection}
            \epsilon_\rho\colon \Check{\mathfrak{t}}^* \dashrightarrow \Check{T},\quad \xi\mapsto \prod_{i=1}^d \lambda_i(\mathrm{d}\lambda_i(\xi))
        \end{equation}
        By the functor of points description of the universal centralizer (\cite{Lusztig1976}, \cite{Knop1996}, \cite{DonagiGaitsgoryTheGerbeofHiggsBundles}, \cite{BFM}, \cite{NgoLeLemmeFondamentalPourLesAlgebresdeLie}), this $W$-invariant rational map corresponds to a rational section of the structure morphism $\pi\colon J_{\Check{G}} \to \mathfrak{c}$, which we also denote by $\epsilon_\rho$. This yields a birational automorphism $\sigma_\rho\colon J_{\Check{G}} \dashrightarrow J_{\Check{G}}$ by means of the multiplication in the group scheme $J_{\Check{G}}$, i.e.\ $j \mapsto \epsilon_\rho(\pi(j)) \cdot j$.
        
        In their paper \cite{gannon2025functorialitycoulombbranches}, Gannon and Webster introduced a \emph{gluability} condition on the representation $\rho$ and proved that when $\rho$ is gluable, the coordinate ring of the corresponding Coulomb branch of cotangent type $\coulomb{G}{T^*\mathbf{N}}$ can be described as
        \[
            \mathcal{O}(\coulomb{G}{T^*\mathbf{N}}) \cong \{f \in \mathcal{O}(J_{\Check{G}}) \mid \sigma_\rho^*(f) \in \mathcal{O}(J_{\Check{G}})\}.
        \]
        In other words, $\coulomb{G}{T^*\mathbf{N}}$ is the affinization of the scheme obtained by gluing two copies of the universal centralizer $J_{\Check{G}}$ using $\sigma_\rho$ as transition function. According to the paragraph following the statement of Theorem 2.8 in \cite{gannon2025functorialitycoulombbranches}, it is a sufficient condition for gluability of $\rho$ that its image in $\GL(\mathbf{N})$ contains the set of scalar matrices. This will suffice for our purposes, as one can easily check that the representations we consider satisfy this requirement.

        In \cite{ginzburg2025pointwisepurityderivedsatake}, Ginzburg extended this gluing construction to describe the relative Langlands dual of $T^*\mathbf{N}$, as follows. Let $\Check{\mathfrak{g}}^*_{\mathrm{reg}}$ denote the regular locus of $\Check{\mathfrak{g}}^*$ and let
        \[
            I_{\mathrm{reg}}:=\{(g,\xi) \in \Check{G} \times \Check{\mathfrak{g}}^*_{\mathrm{reg}} \mid \Ad g(\xi) = \xi\}.
        \]                
        The second projection gives $I_\mathrm{reg}$ the structure of a group scheme over $\Check{\mathfrak{g}}^*_{\mathrm{reg}}$, canonically isomorphic to the pullback of $J_{\Check{G}}$ by the coadjoint quotient $\chi\colon \Check{\mathfrak{g}}^*_{\mathrm{reg}} \to \mathfrak{c}$. Pulling back the Teleman section $\epsilon_\rho$ then yields a rational section $\Check{\mathfrak{g}}^*_{\mathrm{reg}} \dashrightarrow I_{\mathrm{reg}}$ of the second projection, of the form $\xi \mapsto (\gamma_\rho(\xi),\xi)$ for a rational map $\gamma_\rho\colon \Check{\mathfrak{g}}^*_{\mathrm{reg}} \dashrightarrow \Check{G}$. 
        
        Fix $\Check{U} \subseteq \Check{G}$ a maximal unipotent group and a non-degenerate character $\psi\colon \Check{\mathfrak{u}} \to \C$. Let $\mathrm{Wh}_{\Check{G}} := T^*_{\psi}(\Check{G}/\Check{U}) = \Check{G} \times_{\Check{U}} (\psi + \Check{\mathfrak{u}}^\perp)$ denote the Whittaker-twisted cotangent bundle of $\Check{G}/\Check{U}$. It is a smooth affine symplectic variety equipped with a Hamiltonian action of $\Check{G}$ with moment map $\mu\colon \mathrm{Wh}_{\Check{G}} \to \Check{\mathfrak{g}}^*_{\mathrm{reg}}$. The action of the rational map $\gamma_\rho$ gives a $\Check{G}$-equivariant birational automorphism
        \begin{align}
            \sigma_\rho \colon \mathrm{Wh}_{\Check{G}} &\dashrightarrow \mathrm{Wh}_{\Check{G}} \nonumber \\
            x &\mapsto \gamma_{\rho}(\mu(x))\cdot x. \label{eq-automorphism}
        \end{align}
        By \cite[Theorem 3.2.4]{ginzburg2025pointwisepurityderivedsatake}, if the representation $\rho$ is gluable, then the coordinate ring of the relative Langlands dual of $T^*\mathbf{N}$ is
        \begin{equation}\label{eq-algebrasigma}
            \mathcal{A}_{\sigma} := \{f \in \mathcal{O}(\mathrm{Wh}_{\Check{G}}) \mid \sigma_{\rho}^*(f) \in \mathcal{O}(\mathrm{Wh}_{\Check{G}})\}.
        \end{equation}
        Geometrically, this means that the relative Langlands dual of $T^*\mathbf{N}$ can be described as the affinization of the scheme obtained by gluing two copies of the Whittaker-twisted cotangent bundle $\mathrm{Wh}_{\Check{G}}$, one of them shifted by the action of $\gamma_\rho$.

        \subsection{Setup and Notation}\label{subsection-setup}
        
        We consider the case where $G = \Check{G} = \GL_n \times \GL_m$ and $\rho\colon G \to \GL(\Hom(\C^m,\C^n))$ is the representation given by $(g,h) \cdot a = gah^{-1}$. Our goal is to use the gluing procedure outlined above to describe the relative Langlands dual of the $G$-variety $T^*\mathbf{N}$, where $\mathbf{N} := \Hom(\C^m, \C^n)$ is viewed as a spherical $G$-variety via the representation $\rho$. Here, spherical means the variety has a dense open Borel orbit. For an irreducible affine variety $X$, this condition is equivalent to the coordinate ring $\mathcal{O}(X)$ being a multiplicity-free $G$-module (see \cite{Vinberg1978}, \cite{Kac1980}). It is known that the cotangent bundle to a spherical variety $X$ is \emph{coisotropic} in the sense that the field of invariant rational functions $\C(T^*X)^G$ is Poisson commutative, \cite{Vinberg2001}. When some additional conditions are satisfied, such coisotropic varieties are called \emph{hyperspherical}; see \cite[Section 3.5]{benzvi2024relativelanglandsduality}.
        
        Let $T$ be the maximal torus of diagonal matrices in $G$, and denote by $W$ the Weyl group. We identify $\gl_n$, resp.\ $\gl_m$, with its dual via the trace pairing. We write $\mathfrak{c}_n$, resp.\ $\mathfrak{c}_m$, for the coadjoint quotient $\gl_n \git \GL_n$, resp.\ $\gl_m \git \GL_m$, identified with the affine space of monic polynomials of degree $n$, resp.\ $m$.
        
        Recall that the \emph{companion matrix} for a monic polynomial $p = t^n + p_{n-1}t^{n-1} + \dots + p_0$ is the matrix
        \[
            C_p=
            \begin{pmatrix*}
                0 & 0 & \dots & 0 & -p_0 \\
                1 & 0 & \dots & 0 & -p_1 \\
                0 & 1 & \dots & 0 & -p_2 \\
                \vdots & \vdots & \ddots & \vdots & \vdots \\
                0 & 0 & \dots & 1 & -p_{n-1}
            \end{pmatrix*}
        \]
        representing the action of $t$ in $\C[t]/(p(t))$ with respect to the basis $1,t,\dots,t^{n-1}$. The matrix $C_p$ is regular, having the first standard basis vector $e_1 = (1,0,\dots,0)$ as a cyclic vector, and its characteristic polynomial is $p$. The assignment $p \mapsto C_p$ is a section of the (co)adjoint quotient $\chi\colon \gl_{n} \to \mathfrak{c}_n$. Denoting by $\mathcal{S}_n \subseteq \gl_n$ the slice of companion matrices and using the left-invariant trivialization $T^*\GL_n \cong \GL_n \times \gl_n$, we obtain an isomorphism of $\GL_n$-varieties $\mathrm{Wh}_{\GL_n} \cong \GL_n \times \mathcal{S}_n$, the action on the latter being given by $g \cdot (g',a) = (gg',a)$.

        Thus we may (and will) identify the Whittaker-twisted cotangent bundle $\mathrm{Wh}_{\Check{G}}$ of $\Check{G} = \GL_n \times \GL_m$ with the variety $\GL_n \times \GL_m \times \mathcal{S}_n \times \mathcal{S}_m$. Under this identification, the $\Check{G}$-action on $\mathrm{Wh}_{\Check{G}}$ is given by $(g,h) \cdot (g',h',a,b) = (gg',hh',a,b)$ and the moment map is
        \begin{align}
            \mu\colon \mathrm{Wh}_{\Check{G}} = \GL_n \times \GL_m \times \mathcal{S}_n \times \mathcal{S}_m &\to \gl_n \times \gl_m \nonumber \\
            (g,h,a,b) &\mapsto (gag^{-1},hbh^{-1}). \label{eq-momentmap}
        \end{align}        

        \subsubsection{The Teleman section}
    
        The weights $\lambda_{ij}$ of the representation $\rho\colon G \to \GL(\Hom(\C^m,\C^n))$ are such that
        \[
            \mathrm{d}\lambda_{ij}\colon \diag(z_1,\dots,z_n,w_1,\dots,w_m) \mapsto z_i-w_j
        \]
        for each $(i,j) \in \{1,\dots,n\} \times \{1,\dots,m\}$. From formula \eqref{eq-telemansection}, it follows that the corresponding $W$-invariant rational map $\Check{\mathfrak{t}}^* \dashrightarrow \Check{T}$ takes $(a,b) := (\diag(z_1,\dots,z_n),\diag(w_1,\dots,w_m))$ to
        \[
            \textstyle{ \left( \diag\left(\prod_{j=1}^m(z_1-w_j),\dots,\prod_{j=1}^m(z_n-w_j)\right), \diag \Big(\prod_{i=1}^n(z_i-w_1)^{-1},\dots,\prod_{i=1}^n(z_i-w_m)^{-1} \Big)\right)}
        \]
        Since the characteristic polynomial of $a = \diag(z_1,\dots,z_n)$, resp.\ $b = \diag(w_1,\dots,w_m)$, is $\chi_a(t) = \prod_{j=1}^n(t-z_j)$, resp.\ $\chi_b(t) = \prod_{j=1}^m(t-w_j)$, the displayed formula above is the same as $(\chi_b(a),(-1)^n\chi_a(b)^{-1})$. Thus, since the characteristic polynomial is conjugation-invariant, it follows that the rational map $\gamma_\rho \colon \gl_n \times \gl_m \dashrightarrow \GL_n \times \GL_m$ is given by
        \begin{equation}\label{eq-gluing-section}
            (a,b) \mapsto (\chi_{b}(a),(-1)^n\chi_{a}(b)^{-1}).
        \end{equation}

        \subsubsection{The gluing}

        Combining \eqref{eq-gluing-section} and \eqref{eq-momentmap}, the birational automorphism $\sigma_{\rho}\colon \mathrm{Wh}_{\Check{G}} \dashrightarrow \mathrm{Wh}_{\Check{G}}$ defined as in \eqref{eq-automorphism} takes the form:
        \begin{align}
            \sigma_{\rho}(g,h,a,b) &= (\chi_{hbh^{-1}}(gag^{-1})g, (-1)^n\chi_{gag^{-1}}(hbh^{-1})^{-1} h,a,b) \nonumber \\
            &= (\chi_b(gag^{-1})g, (-1)^n\chi_a(hbh^{-1})^{-1}h, a, b) \nonumber \\
            &= (g\chi_b(a),(-1)^nh\chi_a(b)^{-1},a,b) \nonumber
        \end{align}
        Since $a = C_p$ and $b = C_q$ for unique monic polynomials $p \in \mathfrak{c}_n$, $q \in \mathfrak{c}_m$, the above can be rewritten as
        \begin{equation}\label{eq-formula-sigma}
            \sigma_{\rho}(g,h,C_p,C_q) = (gq(C_p), (-1)^nhp(C_q)^{-1}, C_p, C_q).
        \end{equation}

        We then define an algebra
        \[
            \mathcal{A}_{\sigma} := \{f \in \mathcal{O}(\mathrm{Wh}_{\Check{G}}) \mid \sigma_{\rho}^*(f) \in \mathcal{O}(\mathrm{Wh}_{\Check{G}})\}.
        \]
        As explained above, by \cite[Theorem 3.2.4]{ginzburg2025pointwisepurityderivedsatake}, proving \Cref{thm-main} amounts to proving the following:
        \begin{thm}\label{thm-main2}
            There is an isomorphism of $\Check{G}$-schemes $\spec(\mathcal{A}_{\sigma}) = \mathcal{M}_{(m,n)}$.
        \end{thm}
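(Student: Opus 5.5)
Our strategy is to build a $\Check{G}$-equivariant morphism $\Phi\colon \mathcal{M}_{(m,n)} \to \mathrm{Wh}_{\Check{G}}$ which is birational. We then show that $\Phi^*$ identifies $\mathcal{O}(\mathcal{M})$ with $\mathcal{A}_\sigma$. For this we need $\sigma_\rho\circ\Phi$ to extend to a second regular morphism $\Phi'\colon\mathcal{M}\to\mathrm{Wh}_{\Check{G}}$, and the two charts must jointly cover $\mathcal{M}$ up to codimension two.

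\textbf{The two charts.} Given $(x,y_1,y_2,v,u)\in\mathcal{M}$, the first stability condition says $v$ is cyclic for $y_1$ modulo $\im x$. On the open locus $\mathcal{M}^\circ$ where $v$ is in fact $y_1$-cyclic and $u$ is $y_2^T$-cyclic, we set $p=\chi_{y_1}$ and $q=\chi_{y_2}$. We then define
\[
\Phi(x,y_1,y_2,v,u)=\bigl(g,h,C_p,C_q\bigr),
\]
where $g=(v,y_1v,\dots,y_1^{n-1}v)$ satisfies $y_1=gC_pg^{-1}$, and $h$ is built in the same way from $u$ and $y_2$. The transposes in \eqref{eq-correctaction} are arranged exactly so that $(g,h)$ transforms by left multiplication. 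The second chart uses the other natural cyclic data, for example the vectors $x y_2^k u^T$-type combinations or $y_1^k v$ replaced by images under $x$. The claim to verify is that on the overlap the two frames differ by right multiplication by $q(C_p)$ and $(-1)^n p(C_q)^{-1}$, as in \eqref{eq-formula-sigma}.

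\textbf{Role of Bezoutians.} The moment equation $y_1x-xy_2=-vu$ forces $x$, written in the two cyclic frames, to be a Bezoutian-type matrix determined by $p$, $q$ and one further vector. The key identities from \Cref{section-bezout} should be those relating $\mathrm{Bez}(p,q)$ to $q(C_p)$ and to $p(C_q)$, roughly $q(C_p)=\mathrm{Bez}\cdot H$ for a Hankel/triangular factor $H$. Using them we will check two things:
\begin{itemize}
\item $\Phi$ is injective on $\mathcal{M}^\circ$;
\item $\sigma_\rho\circ\Phi$ extends regularly, with $x$ recovered regularly from either chart.
\end{itemize}
Conversely, each coordinate function of $\mathcal{M}$ must lie in $\mathcal{A}_\sigma$. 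We will express $x,y_i,u,v$ through $g$, $h$, $C_p$ and $C_q$, writing $x=g\,\mathrm{Bez}\,h^T$ up to explicit factors. We then show these expressions and their $\sigma_\rho^*$-pullbacks are polynomial, using $\det q(C_p)=\mathrm{Res}(p,q)$.

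\textbf{Conclusion and main obstacle.} Since both $\mathcal{M}$ and $\mathrm{Spec}\,\mathcal{A}_\sigma$ are normal, it is enough to show that the map $\mathcal{M}\to\mathrm{Spec}\,\mathcal{A}_\sigma$ is an isomorphism. We plan to do this by showing that the images of the two charts cover $\mathcal{M}$ outside codimension $\ge 2$ (both $\mathcal{M}$ and $\mathrm{Wh}_{\Check{G}}$ are smooth affine and irreducible), and then applying Hartogs. The main obstacle is this last step: showing $\mathcal{O}(\mathcal{M})\supseteq\mathcal{A}_\sigma$, i.e.\ that a function regular on both charts is regular on all of $\mathcal{M}$. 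This requires controlling the complement of the union of the two cyclic loci, which is governed by the resultant divisor $\mathrm{Res}(p,q)=0$, and checking that it has codimension $\ge2$ in $\mathcal{M}$. We expect to do this using the stability conditions, together with the explicit Bezoutian factorizations.
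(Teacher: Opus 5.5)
\textbf{There is a genuine gap: the chart you write down is not a gluing chart.} Your overall architecture does match the paper: two charts related by $\sigma_\rho$, then Hartogs across a codimension-two complement. Compare your chart with the paper's open embedding
$\Phi_1(g,h,C_p,C_q)=(gK_{p,q}h^T,\,gC_pg^{-1},\,(h^T)^{-1}C_q^Th^T,\,ge_1,\,f_1^Tp(C_q^T)h^T)$.

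\begin{itemize}
\item Its $g$ is exactly your Krylov frame of $v$.
\item Your frame built from $u$, normalized by $f_1^Th_u^T=u$, is $h_u=h\,p(C_q)$. So in your coordinates $x=gK_{p,q}\,p(C_q^T)^{-1}h_u^T$, which has a pole along $\mathrm{Res}(p,q)=0$.
\item Already for $n=m=1$ your chart is $(g,h,a,b)=(v,u,y_1,y_2)$, and $x=-vu/(y_1-y_2)=-gh/(a-b)\notin\mathcal{O}(\mathrm{Wh}_{\Check{G}})$.
\end{itemize}

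Consequently the inverse of your $\Phi$ is not a morphism $\mathrm{Wh}_{\Check{G}}\to\mathcal{M}$. So $\Phi^*$ cannot identify $\mathcal{O}(\mathcal{M})$ with a subring of $\mathcal{O}(\mathrm{Wh}_{\Check{G}})$, let alone with $\mathcal{A}_\sigma$. Your planned check that ``$x=g\,\mathrm{Bez}\,h^T$ up to explicit factors'' is polynomial must fail. Up to sign, your pair $(g_v,h_u)$ is the $g$-component of $\Phi_1^{-1}$ paired with the $h$-component of $\Phi_2^{-1}$. No choice of second chart related to it by $\sigma_\rho$ repairs this.

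The domain is also wrong for Hartogs. Your $\mathcal{M}^\circ=\mathcal{M}_1\cap\mathcal{M}_2$ corresponds under $\Phi_1$ to $\{\mathrm{Res}(p,q)\neq 0\}$, so its complement contains a divisor. Your second chart is never pinned down, so there is no union whose complement you could show has codimension two. The resultant locus is itself a divisor and cannot have codimension $\ge 2$. What has codimension two is $\mathcal{M}\setminus(\mathcal{M}_1\cup\mathcal{M}_2)$, the intersection of the two Krylov-determinant divisors.

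\textbf{The missing idea: each chart uses only one cyclicity condition.} The second frame is obtained by transporting through $x$, and its cyclicity is supplied by stability.
\begin{itemize}
\item On $\mathcal{M}_1=\{v\text{ cyclic for }y_1\}$, take $g$ from $(v,y_1)$ and set $\xi:=e_n^Tg^{-1}x$. Iterating $C_pg^{-1}x-g^{-1}xy_2=-e_1u$ gives $u=\xi\,p(y_2)$. Hence $\bigcap_k\ker(\xi y_2^k)\subseteq\ker x\cap\ker u$, and condition \eqref{eq-condition2} forces $\xi$ to be cyclic. Now take $h$ from $(\xi,y_2)$.
\item This gives $x=gK_{p,q}h^T$, where $K_{p,q}$ is obtained by applying $(p(s)-p(t))/(s-t)$ to $e_1f_1^T$, with $s$ acting by $C_p$ on the left and $t$ by $C_q^T$ on the right. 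This is regular everywhere and yields an isomorphism $\mathrm{Wh}_{\Check{G}}\cong\mathcal{M}_1$.
\item Symmetrically, on $\mathcal{M}_2=\{u\text{ cyclic for }y_2^T\}$ one takes $h$ from $u$ and $g$ from $x$. This gives $\Phi_2$, built from $L_{p,q}$.
\end{itemize}

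\textbf{The Bezoutian input you need is different from the one you guess.} It is not a relation between $\mathrm{Bez}(p,q)$ and $q(C_p)$; for $m\neq n$ that matrix is not even $n\times m$. What is needed is:
\begin{itemize}
\item the rank-one identities $C_pK_{p,q}-K_{p,q}C_q^T=-e_1f_1^Tp(C_q^T)$ and $C_pL_{p,q}-L_{p,q}C_q^T=q(C_p)e_1f_1^T$;
\item the normalizations $e_n^TK_{p,q}=f_1^T$ and $L_{p,q}f_m=e_1$, used for stability of the image and for inverting the charts;
\item the identity $q(C_p)K_{p,q}=-L_{p,q}p(C_q^T)$, which is exactly the statement $\Phi_2=\Phi_1\circ\sigma_\rho$.
\end{itemize}
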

        This will be proven in \Cref{section-mainthm}. The strategy of the proof is to construct two open imbeddings $\Phi_1,\Phi_2\colon \mathrm{Wh}_{\Check{G}} \hookrightarrow \mathcal{M}_{(m,n)}$ satisfying $\Phi_2 = \Phi_1 \circ \sigma_\rho$, and use algebraic Hartogs' lemma to obtain the desired isomorphism. To construct these imbeddings, we need some preliminary results about companion and Bezoutian matrices, discussed below in \Cref{section-bezout}.

    \section{Bezoutians}\label{section-bezout}
        
        Given a monic polynomial $p \in \C[t]$ of degree $n$, let $V_p := \C[t]/(p(t))$. As we observed above, with respect to the basis $1,t,\dots,t^{n-1}$ of $V_p$, the action of $t$ is represented by the companion matrix $C_p$. Furthermore, $V_p$ is equipped with a non-degenerate bilinear form $\mathrm{Tr}\colon V_p \times V_p \to \C$ that makes it into a \emph{Frobenius algebra}. This form is given by
        \[
            (f,g) \mapsto -\mathrm{Res}_{\infty}\left(\frac{f(t)g(t)}{p(t)}\mathrm{d}t\right)
        \]
        where $\mathrm{Res}_{\infty}$ denotes the residue at infinity.
        
        Concretely, this means that the dual space $V_p^* = \Hom(V_p,\C)$ is a $V_p$-module, and the linear function $\eta_p\colon V_p \to \C$ that extracts the $t^{n-1}$-coefficient is a cyclic generator of $V_p^*$. It follows that there exists a unique isomorphism of $V_p$-modules $S_p\colon V_p^* \to V_p$ such that $S_p(\eta_p) = 1$. If $p(t) = t^{n} + p_{n-1}t^{n-1} + \dots + p_0$, then with respect to the basis $1,t,\dots,t^{n-1}$ and the dual basis of $V_p^*$, one finds that $S_p$ is given by the matrix
        \begin{equation}\label{eq-Sp-matrix}
            S_p =
            \begin{pmatrix}
                p_1 & p_2 & \dots & p_{n-1} & 1 \\
                p_2 & \dots & \dots & 1 & 0 \\
                \vdots & \vdots & \dots & \vdots & \vdots \\
                p_{n-1} & 1 & \dots & 0 & 0 \\
                1 & 0 & \dots & 0 & 0
            \end{pmatrix}
        \end{equation}

        We have an isomorphism $V_p \otimes V_p \cong \Hom(V_p^*,V_p)$ given by
        \begin{align*}
            \sum f_i \otimes g_i \longmapsto \left(
            \varphi \mapsto \sum \varphi(g_i)f_i\right).
        \end{align*}
        The image of $\xi \in V_p \otimes V_p$ is a $V_p$-module homomorphism if and only if $(t \otimes 1 - 1 \otimes t)\xi = 0$. Identifying $V_p \otimes V_p \cong \C[x,y]/(p(x),p(y))$ so that $x = t \otimes 1$ and $y = 1 \otimes t$, we see that any polynomial $\Delta \in \C[x,y]$ such that $(x-y)\Delta \in (p(x),p(y))$ defines a $V_p$-module homomorphism $V_p^* \to V_p$. For any $f \in \C[t]$, the rational function
        \begin{equation}\label{eq-bez_def}
            \Delta_{p,f} := \frac{p(x)f(y)-p(y)f(x)}{x-y}
        \end{equation}
        is a polynomial such that $(x-y)\Delta_{p,f} \in (p(x),p(y))$, since interchanging $x$ and $y$ flips the sign of the numerator in the fraction above. It thus gives rise to a $V_p$-module homomorphism $B_{p,f}\colon V_p^* \to V_p$. The matrix of $B_{p,f}$ is formed by the coefficients of the residue class of $\Delta_{p,f}$ in $V_p \otimes V_p$ with respect to the basis $x^iy^j$, $0 \leq i,j < n$. When $\deg(f) \leq n$, these entries are the coefficients of the polynomial $\Delta_{p,f}$ itself:
        \begin{equation}\label{eq-bez_matrix_def}
            B_{p,f} = (b_{ij})_{i,j=1\dots,n}, \text{ where } \Delta_{p,f} = \sum_{i,j=1}^{n}b_{ij}x^{i-1}y^{j-1}
        \end{equation}
        This recovers the classical Bezoutian matrix of the polynomials $p$ and $f$, as defined by Sylvester in 1853; see \cite{Sylvester1853}.
        
        Note that $B_{p,f}^T=B_{p,f}$, since the corresponding element of $V_p \otimes V_p$ is symmetric. Moreover, the fact that $B_{p,f}$ is a $V_p$-module homomorphism translates into the matrix equation
        \[
            C_pB_{p,f} = B_{p,f}C_p^T.
        \]
        When $f=1$, we get a homomorphism $B_{p,1}\colon V_p^* \to V_p$ defined by
        \[
            \Delta_{p,1} = \frac{p(x) - p(y)}{x-y}
        \]
        By direct computation, one sees that $B_{p,1}(\eta_p) = 1$, and so $B_{p,1}$ is the isomorphism $S_p$.

        \begin{rmk}
            By \cite{HeinigRost}, an invertible $n \times n$ matrix $B$ is a Bezoutian if and only if $B^{-1}$ is a Hankel matrix.
        \end{rmk}

        \subsection{Some Useful Identities}

        We now derive some matrix identities that are crucial for the construction of the open imbeddings used in the proof of our main result. To this end, let $p,q \in \C[t]$ be monic, of degrees $n$ and $m$ respectively, and $C_p, C_q$ their companion matrices. In what follows, we consider the space $\Mat_{n \times m}(\C)$ of $n \times m$ matrices as a module over $\C[x,y]$, with $x$ acting as $X \mapsto C_pX$, and $y$ as $X \mapsto XC_q^T$.
        
        Let $e_1,\dots,e_n$, resp.\ $f_1,\dots,f_m$, denote the standard basis of $\C^n$, resp.\ of $\C^m$, thought of as column vectors. We define matrices $K_{p,q}, L_{p,q} \in \Mat_{n\times m}$ by
        \begin{align*}
            K_{p,q} &:= \Delta_{p,1}(x,y) \cdot e_1f_1^T \\
            L_{p,q} &:= \Delta_{q,1}(x,y) \cdot e_1f_1^T
        \end{align*}
        
        \begin{lemma}\label{lemma-bezouttriangles}
            The following matrix identities hold:
            \begin{align}
                C_pK_{p,q}-K_{p,q}C_q^T &= -e_1f_1^Tp(C_q^T)  \label{eq_rankone1} \\
                C_pL_{p,q} - L_{p,q}C_q^T &= q(C_p)e_1f_1^T  \label{eq_rankone2} \\
                q(C_p)K_{p,q} &= -L_{p,q}p(C_q^T) \label{eq_useful}
            \end{align}
        \end{lemma}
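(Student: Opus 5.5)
The whole argument runs in the $\C[x,y]$-module $\Mat_{n\times m}(\C)$ introduced just before the lemma, where $x$ acts by $X\mapsto C_pX$ and $y$ by $X\mapsto XC_q^T$. These actions commute, so the module structure is well defined. In this language each identity becomes a polynomial identity in $\C[x,y]$ applied to the rank-one matrix $E:=e_1f_1^T$:
\begin{itemize}
\item the left-hand sides of \eqref{eq_rankone1} and \eqref{eq_rankone2} are $(x-y)\cdot K_{p,q}$ and $(x-y)\cdot L_{p,q}$;
\item the right-hand sides are $-p(y)\cdot E$ and $q(x)\cdot E$;
\item \eqref{eq_useful} reads $q(x)\Delta_{p,1}\cdot E=-p(y)\Delta_{q,1}\cdot E$.
\end{itemize}

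The key observation is that $p(x)$ annihilates the module, since $p(C_p)=0$ by Cayley--Hamilton. Likewise $q(y)$ annihilates it, since $X q(C_q^T)=X q(C_q)^T=0$. Hence the module factors through $\C[x,y]/(p(x),q(y))$.

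We then compute:
\begin{align*}
(x-y)\Delta_{p,1}(x,y) &= p(x)-p(y) \equiv -p(y) \pmod{p(x)},\\
(x-y)\Delta_{q,1}(x,y) &= q(x)-q(y) \equiv q(x) \pmod{q(y)}.
\end{align*}
Applying these to $E$ gives \eqref{eq_rankone1} and \eqref{eq_rankone2} directly.

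For \eqref{eq_useful}, we use the identity
\[
q(x)\bigl(p(x)-p(y)\bigr)+p(y)\bigl(q(x)-q(y)\bigr)=q(x)p(x)-p(y)q(y),
\]
which lies in the ideal $(p(x),q(y))$. Dividing by $x-y$ requires some care, because $x-y$ need not be invertible on the module. Instead we work at the level of polynomials: the left side of the identity equals $(x-y)\bigl(q(x)\Delta_{p,1}+p(y)\Delta_{q,1}\bigr)$. We therefore need to show that $F:=q(x)\Delta_{p,1}+p(y)\Delta_{q,1}$ itself lies in $(p(x),q(y))$.

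Expanding the right side of the identity,
\[
q(x)p(x)-p(y)q(y) = p(x)\bigl(q(x)-q(y)\bigr)+q(y)\bigl(p(x)-p(y)\bigr) = (x-y)\bigl(p(x)\Delta_{q,1}+q(y)\Delta_{p,1}\bigr).
\]
Since $\C[x,y]$ is a domain, we may cancel $x-y$ and obtain
\[
F = p(x)\Delta_{q,1}+q(y)\Delta_{p,1}\in(p(x),q(y)).
\]
Thus $F$ annihilates $E$, which is exactly \eqref{eq_useful}.

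The only delicate points are checking that the module-action conventions match the matrix expressions, in particular that $p(C_q^T)$ acts on the right as $p(y)$ and that $q(y)$ kills the module via transposition. The cancellation of $x-y$ above is the step to double-check, but it is a genuine polynomial identity, so no invertibility of $x-y$ on the module is needed.
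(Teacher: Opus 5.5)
Your proposal is correct and is essentially the paper's proof. The first two identities come from $(x-y)\Delta_{p,1}=p(x)-p(y)$ and $(x-y)\Delta_{q,1}=q(x)-q(y)$ using $p(C_p)=0$ and $q(C_q^T)=0$; the third comes from the polynomial identity $q(x)\Delta_{p,1}+p(y)\Delta_{q,1}=p(x)\Delta_{q,1}+q(y)\Delta_{p,1}$ applied to $e_1f_1^T$, which the paper obtains by writing $\Delta_{p,q}$ in two ways and you obtain by cancelling $x-y$ in the domain $\C[x,y]$.
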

        \begin{proof}
            We have
            \begin{align*}
                C_pK_{p,q}-K_{p,q}C_q^T &= (x-y)\Delta_{p,1}(x,y) \cdot e_1f_1^T \\
                &= (p(x)-p(y)) \cdot e_1f_1^T \\
                &= p(C_p)e_1f_1^T-e_1f_1^Tp(C_q^T) \\
                &= - e_1f_1^Tp(C_q^T)
            \end{align*}
            proving \eqref{eq_rankone1}. The proof of \eqref{eq_rankone2} is similar.

            Next, we note the following polynomial equalities:
            \begin{align*}
                \Delta_{p,q}(x,y) = q(x)\Delta_{p,1}(x,y)-p(x)\Delta_{q,1}(x,y) = q(y)\Delta_{p,1}(x,y)-p(y)\Delta_{q,1}(x,y)
            \end{align*}
            Applying $\Delta_{p,q}(x,y)$ to the rank-one matrix $e_1f_1^T$ then yields
            \[
                q(C_p)K_{p,q}-p(C_p)L_{p,q} = K_{p,q}q(C_q^T)-L_{p,q}p(C_q^T).
            \]
            This proves \eqref{eq_useful}, since $p(C_p) = 0$ and $q(C_q^T) = 0$.
        \end{proof}

        \begin{lemma}\label{lemma-factorization}
            The $n \times m$ matrices $K_{p,q}$ and $L_{p,q}$ can be factored as follows:
            \begin{itemize}
                \item if $m \leq n$, then 
                \[
                    K_{p,q} = S_p \begin{pmatrix*}
                        \id_{m \times m} \\ *
                    \end{pmatrix*}, \qquad 
                    L_{p,q} = \begin{pmatrix*}
                        \id_{m \times m} \\ 0
                    \end{pmatrix*} S_q
                \]
                where $*$ denotes a possibly non-zero $(n-m) \times m$ block, and $0$ denotes a zero $(n-m) \times m$ block.

                \item if $m > n$, then 
                \[
                    K_{p,q} = S_p \begin{pmatrix*}
                        \id_{n \times n} & 0
                    \end{pmatrix*}, \qquad 
                    L_{p,q} = \begin{pmatrix*}
                        \id_{n \times n} & *
                    \end{pmatrix*} S_q
                \]
                where $*$ denotes a possibly non-zero $n \times (n-m)$ block, and $0$ denotes a zero $n \times (n-m)$ block.
            \end{itemize}
        \end{lemma}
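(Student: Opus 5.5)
The plan is to expand both matrices in the monomial basis $x^iy^j\cdot e_1f_1^T$ and recognise the resulting sums as matrix products. The basic input is the behaviour of the companion matrix on standard basis vectors: $C_pe_k=e_{k+1}$ for $k<n$, and similarly $C_qf_k=f_{k+1}$ for $k<m$. Consequently
\[
x^iy^j\cdot e_1f_1^T=C_p^ie_1\,(C_q^jf_1)^T ,
\]
and $C_p^ie_1=e_{i+1}$ for $i<n$, $C_q^jf_1=f_{j+1}$ for $j<m$. Beyond these ranges the vectors are whatever they are, and they will account for the $*$ blocks.

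Next I write the divided differences explicitly. With $p_n:=1$ and $p_k:=0$ for $k>n$,
\[
\Delta_{p,1}(x,y)=\sum_{i,j\ge 0,\ i+j\le n-1}p_{i+j+1}x^iy^j .
\]
By \eqref{eq-Sp-matrix}, $S_p$ is the Hankel matrix whose $(i+1,j+1)$ entry is $p_{i+j+1}$, and it is symmetric. The analogous statements hold for $\Delta_{q,1}$ and $S_q$.

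For $K_{p,q}$, I group the expansion by the power of $y$:
\[
K_{p,q}=\sum_{j=0}^{n-1}\Big(\sum_{i=0}^{n-1}p_{i+j+1}e_{i+1}\Big)(C_q^jf_1)^T=\sum_{j=0}^{n-1}S_pe_{j+1}(C_q^jf_1)^T=S_pM .
\]
Here $M$ is the $n\times m$ matrix whose $(j+1)$-st row is $(C_q^jf_1)^T$. Since $i\le n-1$ throughout, $C_p^ie_1=e_{i+1}$ is always legitimate. If $m\le n$, the first $m$ rows of $M$ are $f_1^T,\dots,f_m^T$, giving $\begin{pmatrix}\id\\ *\end{pmatrix}$. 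If $m>n$, every row index $j\le n-1<m$, so the rows are $f_1^T,\dots,f_n^T$, giving $\begin{pmatrix}\id & 0\end{pmatrix}$.

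For $L_{p,q}$, I group instead by the power of $x$. Using the symmetry of $S_q$,
\[
L_{p,q}=\sum_{i=0}^{m-1}C_p^ie_1\Big(\sum_{j}q_{i+j+1}f_{j+1}\Big)^T=\sum_{i=0}^{m-1}C_p^ie_1\,(S_qf_{i+1})^T=NS_q .
\]
Here $N$ is the $n\times m$ matrix whose columns are $C_p^ie_1$ for $0\le i\le m-1$. In this grouping $j\le m-1$, so $C_q^jf_1=f_{j+1}$ is legitimate. If $m\le n$, all columns of $N$ are $e_1,\dots,e_m$, so $N=\begin{pmatrix}\id\\ 0\end{pmatrix}$. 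If $m>n$, the first $n$ columns are $e_1,\dots,e_n$ and the remaining $m-n$ columns are $C_p^ie_1$ for $i\ge n$, which are arbitrary; this gives $\begin{pmatrix}\id & *\end{pmatrix}$. The stated width of that block ("$n-m$") should read $m-n$, presumably a typo.

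There is no real obstacle. The only care needed is in choosing, for each matrix, the grouping in which the companion-matrix shift acts only within its valid range, and in checking the index convention $p_{i+j+1}$ against \eqref{eq-Sp-matrix}, including $p_n=1$ on the anti-diagonal.
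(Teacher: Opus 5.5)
Your proof is correct and is exactly the direct route the paper names first: expand $\Delta_{p,1}$ and $\Delta_{q,1}$ and use $C_pe_k=e_{k+1}$, $C_qf_k=f_{k+1}$. The paper's alternative description is the same computation in coordinate-free form: your $M$ is the matrix of $K^T$ for the map $K\colon V_p\to V_q$, $t^j \pmod{p}\mapsto t^j \pmod{q}$. You are also right that in the case $m>n$ the blocks have width $m-n$, not $n-m$.
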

        \begin{proof}
            This can be seen by expanding the polynomials $\Delta_{p,1}$ and $\Delta_{q,1}$, and using the explicit action of companion matrices on basis vectors. Alternatively, one can argue as follows. Define a linear map $K\colon V_p \to V_q$ given on the basis $t^j \pmod{p}$ by 
            \[
                t^j \pmod{p} \mapsto t^j \pmod{q}.
            \]
            Then $K_{p,q}$ is the matrix of the composition $S_p \circ K^T\colon V_q^* \to V_p$, which gives the desired factorization. A similar argument handles the statements about $L_{p,q}$.
        \end{proof}
        \begin{corollary}\label{corollary-bez}
            Both $K_{p,q}$ and $L_{p,q}$ have full rank and they satisfy $e_n^TK_{p,q} = f_1^T$, $L_{p,q}f_m=e_1$. 
        \end{corollary}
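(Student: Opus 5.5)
Both claims follow from \Cref{lemma-factorization}. The plan is to show full rank from the fact that $S_p$ and $S_q$ are invertible, and to read off the two vector identities from the explicit shape of $S_p$ and $S_q$ in \eqref{eq-Sp-matrix}.

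\emph{Full rank.} Take first the case $m\le n$. Since $S_p$ is the anti-triangular matrix \eqref{eq-Sp-matrix} with ones on the antidiagonal, $\det S_p=\pm1$, and likewise for $S_q$. The matrix $\begin{pmatrix*}\id_{m\times m}\\ *\end{pmatrix*}$ has rank $m$ because of its identity block, and so does $\begin{pmatrix*}\id_{m\times m}\\ 0\end{pmatrix*}$. Multiplying by an invertible matrix on the left or right preserves rank. Hence $K_{p,q}$ and $L_{p,q}$ both have rank $m=\min(m,n)$. The case $m>n$ is symmetric: the factors $\begin{pmatrix*}\id_{n\times n} & 0\end{pmatrix*}$ and $\begin{pmatrix*}\id_{n\times n} & *\end{pmatrix*}$ have rank $n$.

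\emph{The identity $e_n^TK_{p,q}=f_1^T$.} By \eqref{eq-Sp-matrix}, the last row of $S_p$ is $e_1^T$, so $e_n^TK_{p,q}=e_1^T M$, where $M$ is the right-hand factor in \Cref{lemma-factorization}. In both cases the first row of $M$ is $f_1^T$: when $m\le n$, the first row of $M$ lies in the identity block, and when $m>n$ it is the first row of $\begin{pmatrix*}\id_{n\times n}&0\end{pmatrix*}$.

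\emph{The identity $L_{p,q}f_m=e_1$.} By \eqref{eq-Sp-matrix}, the last column of $S_q$ is $f_1$. Hence $L_{p,q}f_m=Nf_1$, where $N$ is the left-hand factor in \Cref{lemma-factorization}. In both cases the first column of $N$ is $e_1$.

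The only point needing care is the convention behind the matrix $S_p$ and the dual bases in \Cref{lemma-factorization}, that is, making sure the factorization really puts the identity block in the first rows and columns. As an independent check, I would verify the two identities directly from the definitions. The expansion $\Delta_{p,1}=\sum_{k}p_k\sum_{i+j=k-1}x^iy^j$ gives $K_{p,q}=\sum_k p_k\sum_{i+j=k-1}C_p^ie_1f_1^T(C_q^T)^j=\sum_k p_k\sum_{i+j=k-1}e_{i+1}f_{j+1}^T$. The last equality holds whenever the indices stay in range, since $C_p^ie_1=e_{i+1}$ for $i<n$; a little extra care is needed when $j\ge m$. For the row $e_n^T$, only terms with $i=n-1$ contribute, which forces $k=n$, $j=0$, $p_n=1$, giving $f_1^T$. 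The identity for $L_{p,q}$ follows similarly, using the column $f_m$.
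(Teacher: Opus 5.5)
Your proposal is correct and follows the paper's proof: the paper also deduces full rank and both identities from the factorizations in \Cref{lemma-factorization}, using that $S_p,S_q$ are invertible and that $e_n^TS_p=e_1^T$ and $S_qf_m=f_1$, as read off from \eqref{eq-Sp-matrix}. Your extra direct check via the expansions of $\Delta_{p,1}$ and $\Delta_{q,1}$ is also sound, since only the terms with $j=0$, resp.\ $i=0$, survive, but it is not needed.
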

        \begin{proof}
            This follows from the factorizations in \Cref{lemma-factorization}, together with the fact that the Bezoutian matrices $S_p$ and $S_q$ are invertible and satisfy $e_n^TS_p = e_1^T$, $S_qf_m=f_1$ (see \eqref{eq-Sp-matrix}).
        \end{proof}

    \section{Main result}\label{section-mainthm}

        \subsection{Proof of \texorpdfstring{\Cref{thm-main2}}{Theorem 2.1}}

        We recall the definition of the \emph{triangle part} introduced by Takayama \cite{Takayama2016}. Fix a dimension vector $(m,n) \in \N^2$ and let $\mathcal{M} := \mathcal{M}_{(m,n)}$ be the set of quintuples of linear maps $(x,y_1,y_2,v,u)$ as pictured in the diagram below\footnote{In \cite{NakajimaTakayama2017}, elements of a triangle part are denoted $(A,B_1,B_2,a,b)$, with $A \in \Hom(\C^m,\C^n)$, $B_1 \in \End(\C^m)$, $B_2 \in \End(\C^n)$, $a \in \C^n$, $b \in (\C^m)^*$. The correspondence with our notation is $x=A,\ y_1 = B_2,\ y_2 = B_1,\ v = a,\ u=b$. In particular, the indices $1$ and $2$ are interchanged.}
        \[
            \begin{tikzcd}[column sep = small]
                \C^n  \arrow[out=120,in=60,loop,looseness=5,"y_1"] & & \C^m \arrow[ll, "x"']\arrow[out=120,in=60,loop,looseness=5,"y_2"] \arrow[ld, "u"] \\
                & \C \arrow[lu, "v"]& 
            \end{tikzcd}
        \]
        satisfying the equation
        \begin{align}
            & y_1x-xy_2+vu=0 \label{eq-condition0}
        \end{align}
        and the stability conditions
        \begin{align}
            & \text{There is no $y_1$-invariant proper subspace $S \subseteq \C^n$ such that $S \supseteq \im(x)+\im(v)$}, \label{eq-condition1} \\
            & \text{There is no $y_2$-invariant non-zero subspace $S \subseteq \C^m$ such that $S \subseteq \ker(x) \cap\ker(u)$}. \label{eq-condition2}
        \end{align}
        It follows from \cite[Prop. 2.9]{Takayama2016} that $\mathcal{M}$ is a smooth symplectic affine algebraic variety of dimension $m^2+n^2 +m +n$. As explained in \cite[Section 3.1]{NakajimaTakayama2017}, if $m=n$ then $\mathcal{M}$ is isomorphic to $T^*(\GL_n \times \C^n)$, and if $m<n$ then $\mathcal{M}$ is isomorphic to $\GL_n \times \mathcal{S}_e$, where $\mathcal{S}_e \subseteq \gl_n$ is a Slodowy slice corresponding to a nilpotent element $e$ of Jordan type $(n-m,1^m)$.
        
        There is a Hamiltonian $(\GL_n \times \GL_m)$-action on $\mathcal{M}$ given by 
        \begin{equation}\label{eq-actionontriangles}
            (g,h) \cdot (x,y_1,y_2,v,u) = (gxh^T,gy_1g^{-1},(h^{T})^{-1}y_2h^T,gv,uh^{T})
        \end{equation}
        One checks that the corresponding moment map $\mu_{\mathcal{M}}\colon \mathcal{M} \to \gl_n \times \gl_m$ is given by the formula $(x,y_1,y_2,v,u) \mapsto (y_1,y_2^T)$. We note that this action differs from the one considered in \cite{NakajimaTakayama2017} by an outer automorphism.
    
        Recall from \Cref{section-prelim} that $G = \Check{G} = \GL_n \times \GL_m$, and $\rho\colon G \to \GL(\Hom(\C^m,\C^n))$ is the representation given by $(g,h) \cdot a = gah^{-1}$. Also, $\mathfrak{c}_n = \gl_n \git \GL_n$, resp.\ $\mathfrak{c}_m = \gl_m \git \GL_m$, are the spaces of monic polynomials of degree $n$, resp.\ $m$, and $\mathcal{S}_n \subseteq \gl_n$, resp.\ $\mathcal{S}_m \subseteq \gl_m$ denote the corresponding slices of companion matrices, writing $C_p$ for the companion matrix of a monic polynomial $p$. We use the identification $\mathrm{Wh}_{\Check{G}} = \GL_n \times \GL_m \times \mathcal{S}_n \times \mathcal{S}_m$, so that elements in the Whittaker-twisted cotangent bundle $\mathrm{Wh}_{\Check{G}}$ are quadruples $(g,h,C_p,C_q)$ with $g \in \GL_n, h \in \GL_m, p \in \mathfrak{c}_n, q \in \mathfrak{c}_m$. 
        
        We computed the Teleman section and the corresponding birational automorphism $\sigma_\rho$ of $\mathrm{Wh}_{\Check{G}}$, given by \eqref{eq-formula-sigma}:
        \[
            \sigma_{\rho}(g,h,C_p,C_q) = (gq(C_p), (-1)^nhp(C_q)^{-1}, C_p, C_q)
        \]
        Following \Cref{subsection-setup}, we define an algebra
        \[
            \mathcal{A}_{\sigma} := \{f \in \mathcal{O}(\mathrm{Wh}_{\Check{G}}) \mid \sigma_{\rho}^*(f) \in \mathcal{O}(\mathrm{Wh}_{\Check{G}})\}.
        \]
        Our goal is to prove \Cref{thm-main2}: there is an isomorphism of $\Check{G}$-schemes
        \begin{equation}\label{eq-mainthmiso}
            \spec(\mathcal{A}_\sigma) \cong \mathcal{M}_{(m,n)}.
        \end{equation}
        To prove this isomorphism, we consider the following subvarieties of $\mathcal{M}$:
        \begin{align}
            \mathcal{M}_1 &:= \{ (x,y_1,y_2,v,u) \in \mathcal{M} \mid v \text{ is cyclic for } y_1 \}, \label{eq-open1}\\
            \mathcal{M}_2 &:= \{ (x,y_1,y_2,v,u) \in \mathcal{M} \mid u \text{ is cyclic for } y_2^T \}. \label{eq-open2}
        \end{align}
        These are both $\Check{G}$-stable and open, their respective complements in $\mathcal{M}$ being the divisors given by the equations
        \begin{align*}
           \det(v \wedge y_1v \wedge y_1^2v \wedge \dots \wedge y_1^{n-1}v) = 0, \\
           \det(u \wedge uy_2 \wedge uy_2^2 \wedge \dots \wedge uy_2^{m-1}) = 0.
        \end{align*}

        \begin{prop}\label{prop-main}
            There exist two $\Check{G}$-equivariant open imbeddings $\Phi_1,\Phi_2 \colon \mathrm{Wh}_{\Check{G}} \hookrightarrow \mathcal{M}$ whose images are $\mathcal{M}_1$ and $\mathcal{M}_2$, respectively, and such that $\Phi_2 = \Phi_1 \circ \sigma_{\rho}$.
        \end{prop}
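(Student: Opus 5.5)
The plan is to write down $\Phi_1$ and $\Phi_2$ explicitly, using the matrices $K_{p,q}$ and $L_{p,q}$ of \Cref{section-bezout}. The first constraint is the moment maps. Equivariance together with $\mu_{\mathcal{M}}(x,y_1,y_2,v,u)=(y_1,y_2^T)$ and \eqref{eq-momentmap} forces
\[
y_1 = gC_pg^{-1}, \qquad y_2 = (h^T)^{-1}C_q^Th^T.
\]
Then $y_1x-xy_2 = g(C_pX-XC_q^T)h^T$ whenever $x=gXh^T$, so the moment equation \eqref{eq-condition0} reduces to the Sylvester-type equation $C_pX-XC_q^T=-v_0u_0$, where $v=gv_0$ and $u=u_0h^T$. \Cref{lemma-bezouttriangles} supplies two solutions of exactly this form. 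I therefore set
\begin{align*}
\Phi_1(g,h,C_p,C_q) &:= \bigl(gK_{p,q}h^T,\ gC_pg^{-1},\ (h^T)^{-1}C_q^Th^T,\ ge_1,\ f_1^Tp(C_q^T)h^T\bigr),\\
\Phi_2(g,h,C_p,C_q) &:= \bigl(\epsilon\, gL_{p,q}h^T,\ gC_pg^{-1},\ (h^T)^{-1}C_q^Th^T,\ gq(C_p)e_1,\ -\epsilon f_1^Th^T\bigr),
\end{align*}
where $\epsilon=\pm1$ is a sign to be fixed below. By \eqref{eq_rankone1} and \eqref{eq_rankone2} both maps satisfy \eqref{eq-condition0}, and both are visibly $\Check{G}$-equivariant for the action \eqref{eq-actionontriangles}.

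Next I check the relation $\Phi_1\circ\sigma_\rho=\Phi_2$ by substituting \eqref{eq-formula-sigma}. The $x$-component of $\Phi_1\circ\sigma_\rho$ becomes
\[
(-1)^n\, g\,q(C_p)K_{p,q}\,p(C_q^T)^{-1}h^T = (-1)^{n+1}gL_{p,q}h^T
\]
by \eqref{eq_useful}. The $v$-component becomes $gq(C_p)e_1$, and the $u$-component becomes $(-1)^nf_1^Th^T$. Taking $\epsilon=(-1)^{n+1}$ makes this agree with $\Phi_2$.

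I then verify the images. Since $e_1$ is cyclic for $C_p$, the image of $\Phi_1$ lies in $\mathcal{M}_1$. For $\Phi_2$, note that $f_1^TC_q^{Tk}=f_{k+1}^T$, so $f_1^T$ is cyclic on the right for $C_q^T$; hence $u$ is cyclic for $y_2^T$ and the image lies in $\mathcal{M}_2$.

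The stability conditions also need checking. Condition \eqref{eq-condition1} is automatic once $v$ is cyclic for $y_1$; in the case of $\Phi_2$ it follows from $\mathcal{M}_2$-type reasoning combined with \Cref{corollary-bez}. Dually, condition \eqref{eq-condition2} is automatic once $u$ is cyclic for $y_2^T$, because a $y_2$-invariant subspace contained in $\ker u$ must vanish. For $\Phi_1$, I use $e_n^TK_{p,q}=f_1^T$: a $y_2$-stable subspace $S\subseteq\ker x$ satisfies $f_1^Th^T y_2^k S = e_n^T g^{-1}x y_2^kS=0$, and $f_1^Th^T$ is cyclic for $y_2$, so $S=0$. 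The verification for $\Phi_2$ is symmetric, using $L_{p,q}f_m=e_1$.

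Finally, I show that $\Phi_1\colon \mathrm{Wh}_{\Check{G}}\to\mathcal{M}_1$ is an isomorphism; the case of $\Phi_2$ follows from the relation with $\sigma_\rho$ and the symmetric argument. Given a point of $\mathcal{M}_1$, the cyclicity of $v$ produces a unique $g$ with $g e_1=v$ and $g^{-1}y_1g=C_p$, where $p=\chi_{y_1}$; the entries of $g$ are regular functions, namely the columns $y_1^kv$. Writing $x'=g^{-1}x$, the identity $e_n^TK_{p,q}=f_1^T$ shows that the first row of $h^T$ must be $r_1:=e_n^Tx'$, while conjugating $y_2$ into $C_q^T$ forces the remaining rows to be $r_{k+1}=r_ky_2$. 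So $h$ is uniquely determined, and $\Phi_1$ is injective with an explicit candidate inverse.

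The main obstacle is this last step. I must show that $r_1$ is cyclic for $y_2$ on all of $\mathcal{M}_1$, so that $h$ is invertible and the inverse is regular, and that the reconstructed $x$ and $u$ actually equal $gK_{p,q}h^T$ and $f_1^Tp(C_q^T)h^T$. For the latter I would use uniqueness of solutions of $C_pX-XC_q^T=-e_1u_0$ given the row $e_n^TX$: the equation determines the rows of $X$ recursively from the bottom. For cyclicity I would try to derive it from \eqref{eq-condition2} combined with \eqref{eq-condition0}. If this direct route becomes messy, the fallback is a dimension argument: $\Phi_1$ is an injective equivariant morphism between smooth irreducible varieties of the same dimension $n^2+m^2+n+m$, so by Zariski's Main Theorem it is an open imbedding, and its image is then identified with $\mathcal{M}_1$ via the reconstruction above.
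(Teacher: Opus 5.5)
\textbf{There is a genuine gap, in the step you yourself flag as the main obstacle.} Up to the inverse of $\Phi_1$, your proposal matches the paper: the same $\Phi_1,\Phi_2$ (with $\epsilon=(-1)^{n-1}$), the same use of \eqref{eq_useful} for $\Phi_2=\Phi_1\circ\sigma_\rho$, and the same stability checks.

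What is missing is a proof that $\xi:=e_n^Tg^{-1}x$ is cyclic for the right action of $y_2$ at every point of $\mathcal{M}_1$. Without this you have neither a regular inverse nor the equality $\Phi_1(\mathrm{Wh}_{\Check{G}})=\mathcal{M}_1$.

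Your Zariski Main Theorem fallback does not close the gap. At best it shows that $\Phi_1$ is an open imbedding. By your own reconstruction, its image is exactly the open locus of $\mathcal{M}_1$ where $\xi$ is cyclic for $y_2$. So ``identifying the image with $\mathcal{M}_1$ via the reconstruction'' is the same unproved statement, and the fallback is circular.

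\textbf{The missing idea.} Run your bottom-up recursion on $X':=g^{-1}x$ itself, before any $h$ is available. Equation \eqref{eq-condition0} becomes $C_pX'-X'y_2=-e_1u$, and you read it row by row.
\begin{itemize}
\item Rows $i\ge 2$ give $X'_{i-1}=X'_iy_2+p_{i-1}\xi$, with $X'_n=\xi$.
\item Row $1$ gives $u=p_0\xi+X'_1y_2$.
\item Unwinding, every row of $X'$ lies in $\xi\,\C[y_2]$, and $u=\xi\,p(y_2)$.
\end{itemize}
Hence $S:=\bigcap_{k\ge0}\ker(\xi y_2^k)$ is a $y_2$-invariant subspace contained in $\ker(x)\cap\ker(u)$, and \eqref{eq-condition2} forces $S=0$, i.e.\ $\xi$ is cyclic. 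This is exactly the paper's argument: it derives $u=\xi p(y_2)$ by induction from \eqref{eq-condition0} and then invokes \eqref{eq-condition2}. Only after this can you define $h$ by $f_1^Th^T=\xi$ and $h^Ty_2=C_q^Th^T$; your uniqueness argument for the Sylvester equation then finishes the inverse.

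\textbf{The same gap recurs, dually, for $\Phi_2$,} so ``the symmetric argument'' is not yet available either. There $h$ is determined by $u$. You must show that the last column $Y_m$ of $Y:=x(h^T)^{-1}$ is cyclic for $y_1$.
\begin{itemize}
\item Reading $y_1Y-YC_q^T=(-1)^{n-1}vf_1^T$ column by column shows that all columns of $Y$ (hence of $x$) lie in $\C[y_1]Y_m$.
\item It also gives $v=(-1)^{n-1}q(y_1)Y_m$.
\end{itemize}
So $\C[y_1]Y_m$ is a $y_1$-invariant subspace containing $\im(x)+\im(v)$, and \eqref{eq-condition1} gives cyclicity.
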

        
        \begin{proof}[Proof that \Cref{prop-main} implies \eqref{eq-mainthmiso}]
            Since $\Phi_2 = \Phi_1 \circ \sigma_{\rho}$, we have an algebra homomorphism
            \[
                \Phi_1^*\colon \mathcal{O}[\mathcal{M}] \to \mathcal{A}_\sigma \subseteq \mathcal{O}[\mathrm{Wh}_{\Check{G}}],\quad f \mapsto f \circ \Phi_1.
            \]
            It is injective, as $\Phi_1$ is an open imbedding and $\mathcal{M}$ is irreducible.

            Since $\mathcal{M}$ is normal and the complement of $\mathcal{M}_1\cup\mathcal{M}_2$ is an intersection of distinct divisors, it follows from algebraic Hartogs' lemma that any function that is regular on both $\mathcal{M}_1$ and $\mathcal{M}_2$ is in fact regular on $\mathcal{M}$. Hence the algebra homomorphism above is surjective, and so it induces an isomorphism of affine schemes $\spec(\mathcal{A}_{\sigma}) \cong \mathcal{M}$.
        \end{proof}

        \subsubsection*{Construction of the maps $\Phi_1\colon \mathrm{Wh}_{\Check{G}} \to \mathcal{M}_1$ and $\Phi_2\colon \mathrm{Wh}_{\Check{G}} \to \mathcal{M}_2$}
        
        Let $e_1,\dots,e_n$ denote the standard basis of $\C^n$, $f_1,\dots,f_m$ that of $\C^m$. In what follows we identify elements of $\C^n$ and $\C^m$ with column vectors, and elements of the dual spaces $(\C^n)^*$ and $(\C^m)^*$ with row vectors.
        
        Given $p \in \mathfrak{c}_n$ and $q \in \mathfrak{c}_m$, we have a diagram
        \[
            \begin{tikzcd}[column sep = small]
                \C^n  \arrow[out=120,in=60,loop,looseness=5,"C_p"] & & \C^m \arrow[ll, "K_{p,q}"']\arrow[out=120,in=60,loop,looseness=5,"C_q^T"] \arrow[ld, "f_1^Tp(C_q^T)"] \\
                & \C \arrow[lu, "e_1"]& 
            \end{tikzcd}
        \]
        where $K_{p,q}$ was defined in \Cref{section-bezout}. By \eqref{eq_rankone1}, the linear maps in this diagram satisfy equation \eqref{eq-condition0}. The stability condition \eqref{eq-condition1} holds because $e_1$ is a cyclic vector for $C_p$. To see that condition \eqref{eq-condition2} also holds, we use the fact that $e_n^TK_{p,q}=f_1^T$ (\Cref{corollary-bez}). Thus, if $S \subseteq \ker(K_{p,q})$ is $C_q^T$-invariant and $\xi \in S$, then $f_1^T(C_q^T)^j\xi = e_n^TK_{p,q}(C_q^T)^{j}\xi = 0$ for all $j \geq 0$. Since $f_1^T$ is cyclic for the action of $C_q^T$ on the right, this implies $\xi = 0$ and hence $S = 0$.
        
        Therefore, the above diagram gives a point in $\mathcal{M}_1$. We let $\Phi_1\colon \mathrm{Wh}_{\Check{G}} \to \mathcal{M}_1$ be given by
        \begin{equation}\label{eq-phi1}
            \Phi_1(g,h,C_p,C_q) = (gK_{p,q}h^T, gC_pg^{-1}, (h^T)^{-1}C_q^Th^T,ge_1, f_1^Tp(C_q^T)h^T).
        \end{equation}

        Similarly, there is a diagram
        \[
            \begin{tikzcd}[column sep = small]
                \C^n  \arrow[out=120,in=60,loop,looseness=5,"C_p"] & & \C^m \arrow[ll, "-L_{p,q}"']\arrow[out=120,in=60,loop,looseness=5,"C_q^T"] \arrow[ld, "f_1^T"] \\
                & \C \arrow[lu, "q(C_p)e_1"]& 
            \end{tikzcd}
        \]
        where $L_{p,q}$ was defined in \Cref{section-bezout}. Equation \eqref{eq_rankone2}, the fact that $f_1^T$ is cyclic for the action of $C_q^T$ on the right, and the fact that $L_{p,q}f_m=e_1$ is cyclic for $C_p$ guarantee that the stability conditions hold. We then let $\Phi_2\colon \mathrm{Wh}_{\Check{G}} \to \mathcal{M}_2$ be the map
        \begin{equation}\label{eq-phi2}
            \Phi_2(g,h,C_p,C_q) = ((-1)^{n-1}gL_{p,q}h^T, gC_pg^{-1}, (h^T)^{-1}C_q^Th^T, gq(C_p)e_1,(-1)^nf_1^Th^T).
        \end{equation}

        \begin{lemma}
            The maps $\Phi_1\colon \mathrm{Wh}_{\Check{G}} \to \mathcal{M}_1$ and $\Phi_2\colon \mathrm{Wh}_{\Check{G}} \to \mathcal{M}_2$ defined by \eqref{eq-phi1} and \eqref{eq-phi2} are isomorphisms satisfying $\Phi_2 = \Phi_1 \circ \sigma_{\rho}$.
        \end{lemma}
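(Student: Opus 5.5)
I plan to treat the two assertions separately: first the identity $\Phi_2 = \Phi_1\circ\sigma_\rho$, which is a direct computation, and then the claim that $\Phi_1$ is an isomorphism onto $\mathcal{M}_1$. The corresponding statement for $\Phi_2$ then follows formally: on $\mathcal{M}_2$ the cyclicity of $u$ for $y_2^T$ plays the role of the cyclicity of $v$ for $y_1$.

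\textbf{The identity $\Phi_2=\Phi_1\circ\sigma_\rho$.} Since $\sigma_\rho(g,h,C_p,C_q)=(gq(C_p),(-1)^nhp(C_q)^{-1},C_p,C_q)$, I substitute into \eqref{eq-phi1} and compare with \eqref{eq-phi2} component by component.
\begin{itemize}
\item First component: $gq(C_p)K_{p,q}(p(C_q)^{-1})^T h^T(-1)^n$. By \eqref{eq_useful}, $q(C_p)K_{p,q}=-L_{p,q}p(C_q^T)$, so this equals $(-1)^{n-1}gL_{p,q}h^T$, as required.
\item Second component: $gq(C_p)C_p q(C_p)^{-1}g^{-1}=gC_pg^{-1}$, because $q(C_p)$ commutes with $C_p$.
\item Third component: works the same way, using that $p(C_q^T)$ commutes with $C_q^T$.
\item Fourth component: $gq(C_p)e_1$, which matches directly.
\item Fifth component: $f_1^Tp(C_q^T)(p(C_q^T))^{-1}h^T(-1)^n=(-1)^nf_1^Th^T$, which matches.
\end{itemize}
All of this is an identity of rational maps. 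Both sides are regular where $p(C_q)$ is invertible, so the identity holds wherever $\sigma_\rho$ is defined.

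\textbf{Inverse of $\Phi_1$ on $\mathcal{M}_1$.} Given $(x,y_1,y_2,v,u)\in\mathcal{M}_1$, let $p=\chi_{y_1}$. Since $v$ is cyclic for $y_1$, there is a unique $g\in\GL_n$ with $ge_1=v$ and $gC_pg^{-1}=y_1$, namely $g=(v\,|\,y_1v\,|\cdots|\,y_1^{n-1}v)\,\cdot(e_1|C_pe_1|\cdots)^{-1}$. This $g$ depends regularly on the point, since the determinant appearing is the one defining $\mathcal{M}_1$.

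Next I conjugate by $g^{-1}$ to reduce to $v=e_1$, $y_1=C_p$. I must then produce $h$ and $q$ with $(h^T)^{-1}C_q^Th^T=y_2$, $x=K_{p,q}h^T$ and $u=f_1^Tp(C_q^T)h^T$. The key step is to show that $e_n^Tx$ is a cyclic covector for $y_2$.
\begin{itemize}
\item From \eqref{eq-condition0} with $y_1=C_p$, $v=e_1$, the row $e_n^Tx$ satisfies $e_n^Txy_2=e_n^TC_px+e_n^Te_1u$, i.e.\ the rows $e_i^Tx$ are obtained recursively from $e_n^Tx$ and $u$.
\item Suppose $S\subseteq\C^m$ is $y_2$-invariant and killed by $e_n^Txy_2^j$ for all $j$. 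Running the recursion, I expect to show $S\subseteq\ker x\cap\ker u$, so $S=0$ by \eqref{eq-condition2}.
\end{itemize}
Given this cyclicity, I set $q=\chi_{y_2}$ and take $h^T$ to be the unique matrix with $f_1^Th^T=e_n^Tx$ and $(h^T)^{-1}C_q^Th^T=y_2$. This is the same cyclic-vector normalization as for $g$, motivated by $e_n^TK_{p,q}=f_1^T$ from \Cref{corollary-bez}.

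After also normalizing $h$, it remains to show that $x=K_{p,q}$ and $u=f_1^Tp(C_q^T)$. Equation \eqref{eq-condition0} reads $C_px-xC_q^T=-e_1u$. The rows $2,\dots,n$ of this equation determine $e_{i-1}^Tx$ from $e_i^Tx$, recursively from the fixed last row $f_1^T$. The first row then determines $u$. Hence $x$ and $u$ are uniquely determined, and $(K_{p,q},\,f_1^Tp(C_q^T))$ is a solution by \eqref{eq_rankone1}, so they coincide.

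This gives a regular inverse $\mathcal{M}_1\to\mathrm{Wh}_{\Check{G}}$. Equivariance of $\Phi_1$ is clear from the formulas, so $\Phi_1$ is an isomorphism onto $\mathcal{M}_1$.

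\textbf{Inverse of $\Phi_2$ on $\mathcal{M}_2$.} The argument is symmetric. Cyclicity of $u$ normalizes $h$ so that $u=(-1)^nf_1^T$ and $y_2=C_q^T$. Dually, I show that $xf_m$ (up to sign) is cyclic for $y_1$, using \eqref{eq-condition1}, and normalize $g$ so that this vector is $e_1$, matching $L_{p,q}f_m=e_1$. The column recursion coming from \eqref{eq-condition0} then pins down $x=(-1)^{n-1}L_{p,q}$ and $v=q(C_p)e_1$, using \eqref{eq_rankone2}. Keeping track of the signs $(-1)^n$ is a bookkeeping step to carry through carefully.

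\textbf{Main obstacle.} The step I expect to be hardest is proving cyclicity of $e_n^Tx$ for $y_2$ (and dually of $xf_m$ for $y_1$) from the stability conditions. I would attack it by setting $S=\bigcap_j\ker(e_n^Txy_2^j)$ and inducting downward on the row index $i$, using the relation $e_{i-1}^Tx=e_i^Txy_2-(\text{terms in }u)$ read off from \eqref{eq-condition0}. The first row of \eqref{eq-condition0} is what handles $u$ itself. The inductive claim is that $S$ is $y_2$-invariant and contained in the kernels of all rows of $x$ and of $u$.
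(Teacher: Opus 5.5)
Your proposal is correct and is essentially the paper's proof. It uses the same componentwise check of $\Phi_2=\Phi_1\circ\sigma_\rho$ via \eqref{eq_useful}, the same normalization of $g$ by the cyclic vector $v$ and of $h$ by the covector $e_n^Tg^{-1}x$ (cyclic by \eqref{eq-condition2}), and a final uniqueness step. Your version of that step, where $C_px-xC_q^T=-e_1u$ with prescribed last row $e_n^Tx=f_1^T$ determines $(x,u)$ row by row, is a tidier packaging of the paper's argument with $x'=g^{-1}x-K_{p,q}h^T$ and $u=\xi p(y_2)$.

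One slip to fix: for $i\ge 2$ the row recursion is $e_{i-1}^Tx=e_i^Txy_2+p_{i-1}e_n^Tx$, with no $u$ term. The covector $u$ enters only through row $1$, as $u=e_1^Txy_2+p_0e_n^Tx$. This is exactly what makes your downward induction on $S=\bigcap_j\ker(e_n^Txy_2^j)$ non-circular.
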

        \begin{proof}
            The equality $\Phi_2 = \Phi_1 \circ \sigma_{\rho}$ follows immediately from the definition of the maps and the identity in \eqref{eq_useful}. We prove that $\Phi_1$ is an isomorphism by constructing an explicit inverse. The proof for $\Phi_2$ is analogous.

            Given $(x,y_1,y_2,v,u) \in \mathcal{M}_1$, let $p:=\chi(y_1)$ and $q := \chi(y_2)$ be the characteristic polynomials of $y_1$ and $y_2$, respectively. Since $v$ is a cyclic vector for $y_1$ and $e_1$ is a cyclic vector for the companion matrix $C_p$, there exists a unique $g \in \GL_n$ such that $gC_pg^{-1} = y_1$ and $ge_1 = v$. Consider the covector $\xi:=e_n^Tg^{-1}x \in (\C^m)^*$. We claim that $u = \xi p(y_2)$. To see this, observe first that
            \begin{equation*}
                C_pg^{-1}x-g^{-1}xy_2 = g^{-1}(y_1x-xy_2) = -g^{-1}vu = -e_1u.
            \end{equation*}
            Thus $g^{-1}xy_2 = C_pg^{-1}x+e_1u$, and by induction on $k$ it follows that
            \[
                g^{-1}xy_2^k = C_p^kg^{-1}x+\sum_{j=1}^{k}C_p^{j-1}e_1uy_2^{k-j} = C_p^kg^{-1}x+\sum_{j=1}^{k}e_{j}uy_2^{k-j}
            \]
            for each $k \in \{1,\dots,n\}$, where we have used that $C_p^{j-1}e_1 = e_{j}$ for all $j \in \{1,\dots,n\}$. Multiplying by $e_n^T$ on the left, we obtain
            \[
                \xi y_2^k = e_n^TC_p^kg^{-1}x+\delta_{nk}u.
            \]
            By $\C$-linearity and since $p$ is monic, it follows that $\xi p(y_2) = p(C_p)g^{-1}x + u = u$, proving the claim.

            Now, consider the $y_2$-invariant subspace $S := \bigcap_{k \geq 0}\ker(\xi y_2^k) \subseteq \C^m$. It is clearly contained in $\ker(x)$. Moreover, since $u = \xi p(y_2)$ it follows that $S \subseteq \ker(u)$. The stability condition \eqref{eq-condition2} then implies $S = 0$; hence, $\xi$ is a cyclic covector for the action of $y_2$ on the right. Thus, there exists a unique $h \in \GL_m$ such that $(h^T)^{-1}C_q^Th^T = y_2$ and $f_1^Th^T = \xi$. 
            
            Altogether, we have defined a map $\Psi_1\colon \mathcal{M}_1 \to \mathrm{Wh}_{\Check{G}},\ (x,y_1,y_2,v,u) \mapsto (g,h,C_p,C_q)$. It remains to check that this map is inverse to $\Phi_1$. Verifying $\Phi_1 \circ \Psi_1 = \id_{\mathcal{M}_1}$ boils down to checking the following equalities:
            \begin{itemize}
                \item $g^{-1}x = K_{p,q}h^T$
                \item $u = f_1^Tp(C_q^T)h^T$
            \end{itemize}
            To prove the first equality, let $x' = g^{-1}x - K_{p,q}h^T$ and observe that $C_px'=x'y_2$. The definitions of $\xi$ and $h$ imply that
            \[
                e_n^Tx'=e_n^Tg^{-1}x-e_n^TK_{p,q}h^T = \xi - f_1^Th^T = 0.
            \]
            Hence $e_n^TC_p^kx'=e_n^Tx'y_2^k=0$ for every $k$, which implies $g^{-1}x=K_{p,q}h^T$ as $e_n^T$ is cyclic for the action of $C_p$ on the right. The second identity follows by combining the first with $u = \xi p(y_2)$, $\xi = e_n^Tg^{-1}x$ and $p(y_2) = (h^T)^{-1}p(C_q^T)h^T$ to get $u=e_n^TK_{p,q}p(C_q^T)h^T = f_1^Tp(C_q^T)h^T$.

            The verification that $\Psi_1 \circ \Phi_1 = \id_{{\mathrm{Wh}}_{\Check{G}}}$ is straightforward, so we omit it.
        \end{proof}

        \subsection{The Rankin-Selberg case}

        In the special case $m=n$, Takayama's triangle part $\mathcal{M}_{(m,n)}$ is isomorphic as a symplectic algebraic variety to the cotangent bundle $T^*(\GL_n \times \C^n)$. Using the left-invariant trivialization to write $T^*(\GL_n \times \C^n) = \GL_n \times \gl_n \times \C^n \times (\C^n)^*$, this isomorphism is given explicitly by
        \begin{align*}
            \mathcal{M}_{(n,n)} &\to T^*(\GL_n \times \C^n) \\
            (x,y_1,y_2,v,u) &\mapsto (x,y_2,v,ux^{-1}).
        \end{align*}
        We note that the above is well-defined since the stability conditions imply that $x$ is full rank \cite[Lemma 2.18]{Takayama2016}, and thus invertible in the case $m=n$. The inverse of the above map is        
        \begin{align*}
            T^*(\GL_n \times \C^n) &\to \mathcal{M}_{(n,n)} \\
            (a,y,v,u) &\mapsto (a, aya^{-1}-vu,y,v,ua).
        \end{align*}
        Under this isomorphism, the open subsets $\mathcal{M}_1$ and $\mathcal{M}_2$ defined in \eqref{eq-open1} and \eqref{eq-open2} are identified with the following open subsets:
        \begin{align*}
            \mathcal{M}_1 &:= \{ (a,y,v,u) \in T^*(\GL_n \times \C^n) \mid v \text{ is cyclic for } aya^{-1} \}, \\
            \mathcal{M}_2 &:= \{ (a,y,v,u) \in T^*(\GL_n \times \C^n) \mid u \text{ is cyclic for } (aya^{-1})^T \}.
        \end{align*}
        Moreover, we have $K_{p,q} = S_p$ and $L_{p,q} = S_q$, so the open imbeddings $\Phi_1$ and $\Phi_2$ are given by the formulas
        \begin{align}
            \Phi_1(g,h,C_p,C_q) &= (gS_ph^T,(h^T)^{-1}C_q^Th^T,ge_1,-e_n^Tq(C_p)g^{-1}) \label{eq-morphism1-RS} \\
            \Phi_2(g,h,C_p,C_q) &= ((-1)^{n-1}gS_q h^T,(h^T)^{-1}C_q^Th^T,gq(C_p)e_1,-e_n^Tg^{-1}). \label{eq-morphism2-RS}
        \end{align}

        \begin{corollary}\label{corollary-RS}
            The relative Langlands dual of the $(\GL_n \times \GL_n)$-variety $T^*(\gl_n)$ is the variety $T^*(\GL_n \times \C^n) = \GL_n \times \gl_n \times \C^n \times (\C^n)^*$, equipped with the $(\GL_n \times \GL_n)$-action given by
            \[
                (g,h) \cdot (a,y,v,u) = (gah^T, (h^T)^{-1}yh^T, gv, ug^{-1}).
            \]
        \end{corollary}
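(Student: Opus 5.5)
This is a direct specialization of \Cref{thm-main} (equivalently \Cref{thm-main2}) to $m=n$. The plan is to combine it with the explicit isomorphism $\mathcal{M}_{(n,n)} \cong T^*(\GL_n \times \C^n)$ recalled above, $(x,y_1,y_2,v,u) \mapsto (x,y_2,v,ux^{-1})$, and to transport the $(\GL_n\times\GL_n)$-action \eqref{eq-actionontriangles} along it. By \Cref{thm-main}, the relative Langlands dual of $T^*\Hom(\C^n,\C^n) = T^*\gl_n$ is $\mathcal{M}_{(n,n)}$ with the action \eqref{eq-correctaction}. It therefore suffices to check two things: that this map is a well-defined isomorphism of varieties, and that it intertwines \eqref{eq-actionontriangles} with the action in the statement.

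For well-definedness I will invoke \cite[Lemma 2.18]{Takayama2016}, as the paper does, to get that $x$ has full rank and hence is invertible when $m=n$. Next I verify that the two displayed maps are mutually inverse. Starting from $(a,y,v,u)$, the image is $(a,aya^{-1}-vu,y,v,ua)$. The moment equation \eqref{eq-condition0} then holds: $(aya^{-1}-vu)a - ay + v(ua) = 0$. Applying the forward map returns $(a,y,v,uaa^{-1}) = (a,y,v,u)$. Conversely, \eqref{eq-condition0} forces $y_1 = xy_2x^{-1} - vux^{-1}$, which is exactly what the inverse map reconstructs.

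The one genuinely non-formal point is that the image of the inverse map satisfies the stability conditions \eqref{eq-condition1} and \eqref{eq-condition2}, and this is the step I expect to need care. When $x=a$ is invertible both conditions hold automatically. For \eqref{eq-condition1}, any subspace containing $\im(x)=\C^n$ is all of $\C^n$, so it is not proper. For \eqref{eq-condition2}, $\ker(x)=0$, so the only such subspace is zero. Hence the inverse map indeed lands in $\mathcal{M}_{(n,n)}$. Both maps are given by regular formulas, since inverting $a\in\GL_n$ is regular, so they are inverse isomorphisms of varieties. The symplectic structure is not needed for the statement as phrased, though one could match it too.

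Finally I transport the action. Acting by $(g,h)$ on $(x,y_1,y_2,v,u)$ gives
\[
(gxh^T,\ gy_1g^{-1},\ (h^T)^{-1}y_2h^T,\ gv,\ uh^T).
\]
Under the forward map this becomes
\[
(gxh^T,\ (h^T)^{-1}y_2h^T,\ gv,\ uh^T(h^T)^{-1}x^{-1}g^{-1}) = (gah^T,\ (h^T)^{-1}yh^T,\ gv,\ ug^{-1}),
\]
where $a=x$, $y=y_2$ and the last entry is $ux^{-1}g^{-1}$, i.e.\ $ug^{-1}$ in the new coordinates. This is precisely the action in the statement. Equivariance of the isomorphism then transfers the conclusion of \Cref{thm-main} and proves the corollary.
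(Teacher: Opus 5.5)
Your proposal is correct and takes essentially the paper's route. You specialize \Cref{thm-main} to $m=n$, use the explicit isomorphism $\mathcal{M}_{(n,n)} \cong T^*(\GL_n \times \C^n)$ (with invertibility of $x$ from \cite[Lemma 2.18]{Takayama2016}), and transport the action \eqref{eq-actionontriangles}, which gives exactly $(gah^T,(h^T)^{-1}yh^T,gv,ug^{-1})$; your checks of the moment equation, of stability when $x$ is invertible, and of mutual inverseness spell out details the paper leaves implicit.
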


        \begin{rmk}
            In the case $m=n$, the equality $\Phi_2 = \Phi_1 \circ \sigma_\rho$ involves the matrix identity $p(C_q)S_p=-S_qq(C_p^T)$, which is a special case of \eqref{eq_useful}. This can also be seen as a consequence of the factorization $B_{p,q} = q(C_p)S_p$ and the fact that $B_{p,q} = -B_{q,p}$ for monic polynomials $p$ and $q$ of degree $n$, both of which are classical facts about Bezoutian matrices; see e.g.\ \cite{HELMKE19891039}.
        \end{rmk}

        \subsubsection{Example: the case \texorpdfstring{$n=1$}{n=1}}

        For $n=1$, we have $G = \Check{G} = \C^\times \times \C^\times$, and the rational map $\gamma_\rho\colon \C \times \C \dashrightarrow \C^\times \times \C^\times$ is given by $(a,b) \mapsto (a-b,(a-b)^{-1})$. Moreover, $\mathrm{Wh}_{\Check{G}} = \C^\times \times \C^\times \times \C \times \C$, with $\Check{G}$-action $(g,h) \cdot (g',h',a,b) = (gg', hh', a, b)$ and moment map $\mu(g,h,a,b) = (a,b)$. Thus the rational automorphism $\sigma_{\rho}\colon \mathrm{Wh}_{\Check{G}} \dashrightarrow \mathrm{Wh}_{\Check{G}}$ is given by
        \[
            (g,h,a,b) \mapsto ((a-b)g,(a-b)^{-1}h,a,b).
        \] 
        The algebra $\mathcal{A}_{\sigma}$ can be computed directly, as follows. Since $\mathcal{O}(\mathrm{Wh}_{\Check{G}}) = \C[g,h,a,b,g^{-1},h^{-1}]$, any $f \in \mathcal{O}(\mathrm{Wh}_{\Check{G}})$ can be written as $f = \sum_{i,j \in \Z} f_{ij}(a,b)g^{i}h^{j}$ for some polynomials $f_{ij}$ in two variables. Then $f \in \mathcal{A}_{\sigma}$ if and only if the pullback
        \[
            \sigma_\rho^*(f) = \sum_{i,j \in \Z} f_{ij}(a,b)(a-b)^{i-j}g^ih^j
        \]
        is a regular function on $\mathrm{Wh}_{\Check{G}}$, which in turn holds if and only if $f_{ij}(a,b)(a-b)^{i-j} \in \C[a,b]$ for every $i,j \in \Z$. It follows that $\mathcal{A}_{\sigma}$ is the subalgebra of $\mathcal{O}(\mathrm{Wh}_{\Check{G}})$ generated by the elements $a,b,g,h^{-1},gh,h^{-1}g^{-1}, (a-b)g^{-1},(a-b)h$.
    
        Setting
        \[
            (gh,b,g,(a-b)g^{-1}) =: (z,y,v,u)
        \]
        yields the isomorphism $\spec(\mathcal{A}_{\sigma}) \cong \C^\times \times \C \times \C \times \C$, the $\Check{G}$-action on the right-hand side being $(g,h) \cdot (z,y,v,u) = (gzh,y,gv,uh^{-1})$.

    \section{Gluing via involution}\label{section-involution}

        In this section, following a suggestion of Ginzburg, we modify the gluing procedure outlined in \Cref{section-prelim} to make the rational automorphism used as a transition function an involution. As a result, for any complex reductive group $G$ and any finite-dimensional representation $\mathbf{N}$ of $G$, the relative Langlands dual of $T^*\mathbf{N}$ is naturally equipped with an anti-symplectic involution. We give an explicit formula for this involution in the Rankin-Selberg case. I am grateful to Victor Ginzburg for suggesting both the idea and the approach used here.
        
        \subsection{Duality involution}

        Let $G$ be an arbitrary complex reductive group, and $\Check{G}$ its Langlands dual. The group $\Check{G}$ has a canonical pinning, which determines a maximal unipotent subgroup $\Check{U} \subseteq \Check{G}$ and a non-degenerate character $\psi\colon \Check{\mathfrak{u}} = \mathrm{Lie}(\Check{U}) \to \C$.

        Let $\iota\colon \Check{G} \to \Check{G}$ be the unique involution of $\Check{G}$ that negates the pinning and acts on the maximal torus $\Check{T}$ by $t \mapsto w_0(t^{-1})$, where $w_0$ denotes the longest element of the Weyl group. Following \cite[Section 2.3]{benzvi2024relativelanglandsduality} and \cite{Prasad2018}, we refer to this as the \emph{duality involution}. Let $I$ denote the \emph{anti-}symplectic lift of $\iota$ to the cotangent bundle $T^*\Check{G}$, i.e.\ the composition of the canonical cotangent lift with fiber-wise negation. Under the left-invariant trivialization $T^*\Check{G} \cong \Check{G} \times \Check{\mathfrak{g}}^*$, this reads as
        \[
            I\colon (g,\xi) \mapsto (\iota(g),-(\mathrm{d}\iota)^T(\xi))
        \]
        where $\mathrm{d}\iota\colon \Check{\mathfrak{g}} \to \Check{\mathfrak{g}}$ is the involution on $\Check{\mathfrak{g}}$ and $(-)^T$ denotes transposition of linear maps. 
        
        The moment map of the left $\Check{G}$-action on $\Check{G} \times \Check{\mathfrak{g}}^* \cong T^*\Check{G}$ is given by $\mu\colon (g, \xi) \mapsto \mathrm{Ad}_g(\xi)$; hence $\mu \circ I = - (\mathrm{d}\iota)^T$. Since $\iota$ negates the pinning, this implies that $I$ preserves $\Check{G} \times_{\Check{U}} (\psi + \Check{\mathfrak{u}}^\perp) \subseteq T^*\Check{G}$. Thus $I$ descends to a well-defined anti-symplectic involution $\theta$ on the Whittaker-twisted cotangent bundle $\mathrm{Wh}_{\Check{G}} := T^*_{\psi}(\Check{G}/\Check{U}) = \Check{G} \times_{\Check{U}} (\psi + \Check{\mathfrak{u}}^\perp)$. Moreover, it is clear that $\theta(g \cdot x) = \iota(g) \cdot \theta(x)$ for every $g \in \Check{G}$ and $x \in \mathrm{Wh}_{\Check{G}}$.
        
            \subsubsection{Example: \texorpdfstring{$\GL_n$}{GLn}}
            In the case of $\GL_n$ with the standard pinning, the duality involution is given explicitly by $g \mapsto J(g^T)^{-1}J$, where $J$ is the antidiagonal matrix
            \begin{equation}\label{eq-matrixantidiagonal}
                J = \begin{pmatrix*}
                    0 & 0 & \cdots & 0 & 1 \\
                    0 &  0 & \cdots & 1 & 0 \\
                    \vdots & & \iddots & 0 & \vdots\\
                    0 & 1 & & \\
                    1 & 0 & \dots & 0 & 0
                \end{pmatrix*}
            \end{equation}
            The anti-symplectic lift $I$ to $T^*\GL_n$, identified via left-invariant trivialization with $\GL_n \times \gl_n$, is given by
            \[
                (g,a) \mapsto (J(g^T)^{-1}J, Ja^TJ)
            \]
            As explained above, this descends to an anti-symplectic involution $\theta\colon \mathrm{Wh}_{\GL_n} \to \mathrm{Wh}_{\GL_n}$. Under the identification $\mathrm{Wh}_{\GL_n} \cong \GL_n \times \mathcal{S}_n$, where $\mathcal{S}_n$ is the slice of companion matrices, this involution is described as follows. 
            
            Recall that if $C_p$ denotes the companion matrix of $p(t) = t^n + p_{n-1}t^{n-1} + \dots + p_0$ then the Bezoutian matrix $S_p$ defined in \eqref{eq-Sp-matrix} satisfies $S_pC_p^T = C_pS_p$. Moreover, the matrix $S_pJ$ is a unipotent upper-triangular matrix, given by
            \[
                S_pJ =
                \begin{pmatrix}
                    1 & p_{n-1} & \dots & p_2 & p_1 \\
                    0 & 1 & \dots & p_3 & p_2 \\
                    \vdots & \vdots & \ddots & \vdots & \vdots \\
                    0 & 0 & \dots & 1 & p_{n-1} \\
                    0 & 0 & \dots & 0 & 1
                \end{pmatrix}
            \]
            Therefore, in the quotient $\mathrm{Wh}_{\GL_n} = \GL_n \times_U (\psi + \mathfrak{u}^\perp)$, $I(g,C_p)$ is identified with its image under the right action of $(S_pJ)^{-1} \in U$. Explicitly, this image is
            \[
                (J(g^T)^{-1}JJS_p^{-1}, S_pJJC_p^TJJS_p^{-1}) = (J(g^T)^{-1}S_p^{-1}, S_pC_p^TS_p^{-1}) = (J(g^T)^{-1}S_p^{-1}, C_p).
            \]
            Since the above lies in $\GL_n \times \mathcal{S}_n$, it follows that the involution $\theta\colon \GL_n \times \mathcal{S}_n \to \GL_n \times \mathcal{S}_n$ is given by
            \begin{equation}\label{eq-involution-GLn}
                \theta(g,C_p) = (J(g^T)^{-1}S_p^{-1},C_p).
            \end{equation}

        \subsection{Modified gluing}

        We keep the notation of \Cref{subsection-gluingoutline}. We define a rational map $\tau_\rho\colon \mathrm{Wh}_{\Check{G}} \dashrightarrow \mathrm{Wh}_{\Check{G}}$ by $\tau_\rho := \theta \circ \sigma_\rho$ and let
        \[
            \mathcal{A}_\tau := \{f \in \mathcal{O}(\mathrm{Wh}_{\Check{G}}) \mid \tau_{\rho}^*(f) \in \mathcal{O}(\mathrm{Wh}_{\Check{G}})\}.
        \]       
        The map $f \mapsto \theta^*(f)$ gives an isomorphism of rings $\mathcal{A}_\tau \to \mathcal{A}_\sigma$, and thus induces an isomorphism $\spec(\mathcal{A}_{\sigma}) \to \spec(\mathcal{A}_\tau)$. Since $\theta(g \cdot x) = \iota(g) \cdot \theta(x)$, this isomorphism twists the $\Check{G}$-action by the duality involution $\iota$. Thus $\spec(\mathcal{A}_\tau)$ is isomorphic to the relative Langlands dual of $T^*\mathbf{N} = \mathbf{N} \oplus\mathbf{N}^*$, but with the action twisted by the duality involution $\iota$, corresponding to swapping the components of the polarization of $T^*\mathbf{N}$, i.e.\ $T^*\mathbf{N} = \mathbf{N}^* \oplus (\mathbf{N}^*)^*$.
        
        \begin{prop}\label{prop-involution}
            The map $\tau_\rho$ is an involution.
        \end{prop}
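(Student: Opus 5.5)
The plan is to prove the identity $\tau_\rho\circ\tau_\rho=\id$ of rational maps on a dense open subset of $\mathrm{Wh}_{\Check{G}}$. I will reduce it to a pointwise identity for $\gamma_\rho$, and then check that identity on the maximal torus using the explicit Teleman formula \eqref{eq-telemansection}.

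\emph{Step 1: formal properties.} Write $\phi:=-(\mathrm{d}\iota)^T\colon \Check{\mathfrak{g}}^*\to\Check{\mathfrak{g}}^*$. This is a linear involution that preserves the regular locus. From the discussion preceding the statement I will use four facts:
\begin{itemize}
\item $\theta^2=\id$;
\item $\theta(g\cdot x)=\iota(g)\cdot\theta(x)$;
\item $\mu\circ\theta=\phi\circ\mu$, which follows from $\mu\circ I=-(\mathrm{d}\iota)^T\circ\mu$;
\item $\mu\circ\sigma_\rho=\mu$, because $\gamma_\rho(\mu(x))$ lies in the stabilizer of $\mu(x)$ and $\mu$ is $\Check{G}$-equivariant.
\end{itemize}
Then for $x$ in the domain of definition,
\[
\tau_\rho(\tau_\rho(x))=\theta\bigl(\gamma_\rho(\mu(\theta\sigma_\rho x))\cdot\theta\sigma_\rho(x)\bigr)=\iota\bigl(\gamma_\rho(\phi(\mu(x)))\bigr)\cdot\sigma_\rho(x)=\iota\bigl(\gamma_\rho(\phi(\xi))\bigr)\,\gamma_\rho(\xi)\cdot x,
\]
where $\xi=\mu(x)$. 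It therefore suffices to prove the rational identity
\[
\iota\bigl(\gamma_\rho(\phi(\xi))\bigr)=\gamma_\rho(\xi)^{-1}\qquad(\xi\in\Check{\mathfrak{g}}^*_{\mathrm{reg}}).
\]

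\emph{Step 2: reduction to the torus.} Both sides are rational maps $\Check{\mathfrak{g}}^*_{\mathrm{reg}}\dashrightarrow\Check{G}$. I will show both are equivariant for conjugation, twisted by $\iota$ on the left side:
\begin{itemize}
\item $\gamma_\rho(\Ad_g\xi)=g\gamma_\rho(\xi)g^{-1}$, since $\gamma_\rho$ is pulled back from $J_{\Check{G}}$;
\item $\phi\circ\Ad^*_g=\Ad^*_{\iota(g)}\circ\phi$, since $\iota$ is a group automorphism, so $\phi(\Ad_g\xi)=\Ad_{\iota(g)}\phi(\xi)$.
\end{itemize}
Since regular semisimple elements conjugate into $\Check{\mathfrak{t}}^*$ are dense, it is enough to check the identity for $\xi\in\Check{\mathfrak{t}}^*$ regular. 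There $\gamma_\rho(\xi)=\epsilon_\rho(\xi)\in\Check{T}$.

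\emph{Step 3: the torus computation.} On $\Check{T}$ we have $\iota(t)=w_0(t^{-1})$. Differentiating and dualizing, $(\mathrm{d}\iota)^T$ acts on $\Check{\mathfrak{t}}^*$ by $\xi\mapsto -w_0\xi$, hence $\phi(\xi)=w_0\xi$. This sign bookkeeping is where I expect the only real care to be needed. The two conventions to keep straight are the identification $\Check{\mathfrak{t}}^*\cong\mathfrak{t}$ and the fact that $w_0$ is an involution. The required identity then becomes
\[
w_0\bigl(\epsilon_\rho(w_0\xi)\bigr)^{-1}=\epsilon_\rho(\xi)^{-1}.
\]
This follows from the $W$-equivariance of $\epsilon_\rho$, namely $\epsilon_\rho(w\xi)=w\,\epsilon_\rho(\xi)$. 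That equivariance is visible from \eqref{eq-telemansection}, because $W$ permutes the multiset of nonzero weights and $\lambda(\mathrm{d}\lambda(w^{-1}\xi))$ is the $w$-conjugate of the factor for the weight $w\lambda$.

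Finally I will note that all maps involved are rational and are defined on a dense open set, so the identity of rational maps follows. In particular $\tau_\rho$ is birational with inverse itself.
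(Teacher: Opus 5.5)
Your proof is correct and follows essentially the same route as the paper: you reduce $\tau_\rho^2=\id$ to the identity $\iota\bigl(\gamma_\rho(-(\mathrm{d}\iota)^T\xi)\bigr)=\gamma_\rho(\xi)^{-1}$ and verify it on $\Check{\mathfrak{t}}^*$ using $-(\mathrm{d}\iota)^T=w_0$ there, the $W$-equivariance of $\epsilon_\rho$, and $w_0^2=1$. The differences are only presentational: you derive the reduction from the formal properties of $\theta$, $\mu$ and $\sigma_\rho$ rather than in the coordinates $[g,\xi]$, and you justify the reduction to the torus (conjugation-equivariance of both sides plus density of regular semisimple elements), which the paper states without comment.
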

        \begin{proof}
            Write $[g,\xi]$ for the equivalence class of $(g,\xi) \in \Check{G} \times (\psi + \Check{\mathfrak{u}}^{\perp})$ in the Whittaker-twisted cotangent bundle $\mathrm{Wh}_{\Check{G}} := (\Check{G} \times (\psi + \Check{\mathfrak{u}}^{\perp}))/\Check{U}$. The moment map $\mu$ is $\mu([g,\xi]) = \Ad_{g}(\xi)$, and from the definition of $\gamma_\rho$ it follows that $\gamma_\rho(\Ad_g(\xi)) = g\gamma_\rho(\xi)g^{-1}$. Hence,
            \[
                \sigma_{\rho}([g,\xi]) = [\gamma_\rho(\Ad_g(\xi))g,\xi] = [g\gamma_\rho(\xi),\xi].
            \]
            Applying $\theta$, we obtain
            \[
               \tau_{\rho}([g,\xi]) = [\iota(g\gamma_\rho(\xi)), -(\mathrm{d}\iota)^T(\xi)].
            \]
            From this it follows that
            \[
                \tau_{\rho}^2([g,\xi]) = [g\gamma_\rho(\xi) \iota(\gamma_\rho(-(\mathrm{d}\iota)^T(\xi))), \xi]
            \]
            So, $\tau_\rho$ is an involution if and only if $\iota(\gamma_\rho(-(\mathrm{d}\iota)^T(\xi))) = \gamma_\rho(\xi)^{-1}$. It suffices to check this for $\xi \in \Check{\mathfrak{t}}^*$, where $\iota$ is given by $t \mapsto w_0(t^{-1})$, $(\mathrm{d}\iota)^T$ is given by $\xi \mapsto -w_0 \cdot \xi$, and $\gamma_\rho$ coincides with the $W$-equivariant map \eqref{eq-telemansection}. Thus
            \[
                \iota(\gamma_\rho(-(\mathrm{d}\iota)^T(\xi))) = (w_0 \cdot \gamma_\rho(w_0 \cdot \xi))^{-1} = (w_0^2 \cdot \gamma_\rho(\xi))^{-1}
            \]
            and the result follows from the fact that $w_0$ has order $2$.
        \end{proof}

        \begin{rmk}
            By definition, the affine scheme $\spec(\mathcal{A}_\tau)$ comes equipped with two natural maps $j_-,j_+\colon \mathrm{Wh}_{\Check{G}} \to \spec(\mathcal{A}_\tau)$, induced from the ring maps $\mathcal{A}_\tau \to \mathcal{O}(\mathrm{Wh}_{\Check{G}})$. Moreover, $\tau_\rho$ induces an involution $\tau_\rho$ of $\mathcal{A}_\tau$ such that $j_+ = j_- \circ \tau_\rho$.
        \end{rmk}

        \subsubsection{Example: Rankin-Selberg}

            We now explicitly compute the involution $\tau_\rho$ on the Hamiltonian space obtained by gluing in the Rankin-Selberg case. Recall the notation from \Cref{section-mainthm}: $G = \Check{G} = \GL_n \times \GL_n$, and $\rho$ is the representation of $G$ on $\Hom(\C^n,\C^n)$ by $(g,h) \cdot a = gah^{-1}$. We use the identification $\mathrm{Wh}_{\Check{G}} = \GL_n \times \GL_n \times \mathcal{S}_n \times \mathcal{S}_n$. By \eqref{eq-involution-GLn} above, the duality involution on $\mathrm{Wh}_{\Check{G}}$ is given by
            \[
                (g,h,C_p,C_q) \mapsto (J(g^T)^{-1}S_p^{-1}, J(h^T)^{-1}S_q^{-1},C_p,C_q)
            \]
            where $C_p$ denotes the companion matrix of $p$ and $S_p$ is the Bezoutian matrix \eqref{eq-Sp-matrix}, analogously for $C_q$, $S_q$.

            Recall formula \eqref{eq-formula-sigma} for the birational automorphism $\sigma_\rho$: 
            \[
                \sigma_{\rho}(g,h,C_p,C_q) = (gq(C_p), (-1)^nhp(C_q)^{-1}, C_p, C_q).
            \]
            Thus, $\tau_\rho := \theta \circ \sigma_\rho$ is given by
            \begin{align*}
                \tau_{\rho}(g,h,C_p,C_q) &= (J(g^T)^{-1}q(C_p^T)^{-1}S_p^{-1}, (-1)^nJ(h^T)^{-1}p(C_q^T)S_q^{-1}, C_p, C_q).
            \end{align*}

            Write $T^*(\GL_n \times \C^n) = \GL_n \times \gl_n \times \C^n \times (\C^n)^*$ and recall the morphisms $\Phi_1$ and $\Phi_2$ from \eqref{eq-morphism1-RS} and \eqref{eq-morphism2-RS}:
            \begin{align*}
                \Phi_1(g,h,C_p,C_q) &= (gS_ph^T,(h^T)^{-1}C_q^Th^T,ge_1,-e_n^Tq(C_p)g^{-1}) \\
                \Phi_2(g,h,C_p,C_q) &= ((-1)^{n-1}gS_q h^T,(h^T)^{-1}C_q^Th^T,gq(C_p)e_1,-e_n^Tg^{-1}). 
            \end{align*}
            Recall that $\Phi_2 = \Phi_1 \circ \sigma_\rho$. Let $\Psi_1 := \Phi_1 \circ \theta$.  Explicitly,
            \begin{equation*}
                \Psi_1(g,h,C_p,C_q) = (J(g^T)^{-1}S_q^{-1}h^{-1}J, JhC_qh^{-1}J, J(g^T)^{-1}e_n, -e_1^Tq(C_p^T)g^TJ)
            \end{equation*}
            Since $\theta$ is an involution, we have 
            \[
                \Psi_1 \circ \tau_{\rho} = \Phi_1 \circ \sigma_{\rho} = \Phi_2
            \]
            Therefore, using the same argument as in the proof of \Cref{thm-main2}, but with $\Psi_1$ in place of $\Phi_1$, we obtain:
            
            \begin{corollary}
                There is an isomorphism of affine schemes
                \[
                    \spec(\mathcal{A}_\tau) \cong T^*(\GL_n \times \C^n) = \GL_n \times \gl_n \times \C^n \times (\C^n)^*
                \]
                which intertwines the $\Check{G}$-action on $\spec(\mathcal{A}_\tau)$ with the action 
                \begin{equation}\label{eq-actionnew}
                    (g,h) \cdot (a,y,v,u) = (J(g^T)^{-1}JaJh^{-1}J, JhJyJh^{-1}J, J(g^T)^{-1}Jv, uJg^TJ).
                \end{equation}                                
                Moreover, the involution $\tau_{\rho}$ on $\spec(\mathcal{A}_\tau)$ is intertwined by this isomorphism with the involution on $T^*(\GL_n \times \C^n)$ given by
                \begin{equation}\label{eq-involutionRS}
                    (a,y,v,u) \mapsto ((-1)^{n-1}J(a^T)^{-1}J, Jy^TJ, -Ju^T, -v^TJ).
                \end{equation}
            \end{corollary}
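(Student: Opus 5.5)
The proof has two parts: first produce the isomorphism and the action, then compute the involution on the open set $\mathcal{M}_2$, where both sides can be written down explicitly.

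\textbf{The isomorphism.} I would run the argument of \Cref{prop-main} again, with $\Psi_1 = \Phi_1 \circ \theta$ in place of $\Phi_1$. Since $\theta$ is an involutive automorphism of $\mathrm{Wh}_{\Check{G}}$, $\Psi_1$ is an open imbedding with image $\mathcal{M}_1$. Moreover $\Psi_1 \circ \tau_\rho = \Phi_2$ has image $\mathcal{M}_2$. The Hartogs argument then shows that $\Psi_1^*\colon \mathcal{O}(T^*(\GL_n \times \C^n)) \to \mathcal{A}_\tau$ is an isomorphism.

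\textbf{The action.} $\Phi_1$ is $\Check{G}$-equivariant and $\theta(g\cdot x) = \iota(g)\cdot\theta(x)$. Hence $\Psi_1(k\cdot x) = \iota(k)\cdot \Psi_1(x)$, where the right side uses the action of \Cref{corollary-RS}. On $\GL_n$ the duality involution is $\iota(g) = J(g^T)^{-1}J$. Substituting $\iota(g),\iota(h)$ into $(a,y,v,u) \mapsto (gah^T, (h^T)^{-1}yh^T, gv, ug^{-1})$ gives \eqref{eq-actionnew} directly.

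\textbf{The involution.} Let $\Theta$ be the involution of $\spec(\mathcal{A}_\tau)$ induced by $\tau_\rho$, transported to $T^*(\GL_n\times\C^n)$. It is characterized on the dense open set $\mathcal{M}_2$ by
\[
\Theta(\Phi_2(x)) = \Psi_1(x).
\]
Indeed, $\Psi_1 \circ \tau_\rho = \Phi_2$ and $\tau_\rho^2 = \id$ give $\Psi_1 = \Phi_2 \circ \tau_\rho$, so $\Theta$ corresponds to $\tau_\rho$. It therefore suffices to check that the formula \eqref{eq-involutionRS}, call it $F$, satisfies $F(\Phi_2(g,h,C_p,C_q)) = \Psi_1(g,h,C_p,C_q)$ for all $(g,h,C_p,C_q)$. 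Equality then extends to all of $T^*(\GL_n\times\C^n)$ by density and separatedness. Note that $F$ is visibly a regular involution, since $J^2 = 1$.

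I compare the four components of $F(\Phi_2(\cdot))$ and $\Psi_1$ in turn.
\begin{itemize}
\item \emph{First component.} It is $(-1)^{n-1}J\bigl(((-1)^{n-1}gS_qh^T)^T\bigr)^{-1}J = J(g^T)^{-1}S_q^{-1}h^{-1}J$, using $S_q^T = S_q$. This matches $\Psi_1$.
\item \emph{Second component.} $J\bigl((h^T)^{-1}C_q^Th^T\bigr)^TJ = JhC_qh^{-1}J$. This matches.
\item \emph{Third component.} It is $-J(-e_n^Tg^{-1})^T = J(g^T)^{-1}e_n$. This matches.
\item \emph{Fourth component.} It is $-(gq(C_p)e_1)^TJ = -e_1^Tq(C_p^T)g^TJ$. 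This matches.
\end{itemize}
Thus $F\circ\Phi_2 = \Psi_1$, which proves \eqref{eq-involutionRS}. The only real subtlety is getting the direction right: whether $\Theta$ corresponds to $\tau_\rho$ or to its inverse. This is harmless because $\tau_\rho$ is an involution by \Cref{prop-involution}.

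Finally, I would double-check the stated formula for $\Psi_1$ by composing \eqref{eq-morphism1-RS} with $\theta(g,h,C_p,C_q) = (J(g^T)^{-1}S_p^{-1}, J(h^T)^{-1}S_q^{-1},C_p,C_q)$. This uses $S_p^T = S_p$ and $S_p^{-1}q(C_p)^{-1}\ldots$ simplifications. The most delicate of these is the $u$-component, which requires $q(C_p)S_p = S_pq(C_p^T)$, a consequence of $C_pS_p = S_pC_p^T$.
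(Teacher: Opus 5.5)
Your proposal is correct and follows the paper's own route. The paper likewise replaces $\Phi_1$ by $\Psi_1=\Phi_1\circ\theta$, notes that $\Psi_1\circ\tau_\rho=\Phi_1\circ\sigma_\rho=\Phi_2$, and reruns the Hartogs argument from the proof of \Cref{thm-main2}. Your derivation of the twisted action from $\Psi_1(k\cdot x)=\iota(k)\cdot\Psi_1(x)$, and your componentwise check that $F\circ\Phi_2=\Psi_1$, correctly supply the verifications that the paper leaves implicit.
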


        \begin{rmk}
            The action given by \eqref{eq-actionnew} is the action defined in \Cref{corollary-RS}, but twisted by the duality involution $\iota(g,h) = (J(g^T)^{-1}J,J(h^T)^{-1}J)$.
            
            Moreover, the involution given by \eqref{eq-involutionRS} is equal to the composition $\tau_0 \circ \alpha$, where $\tau_0$ is the anti-symplectic involution on $T^*(\GL_n \times \C^n)$ given by
            \[
                \tau_0(a,y,v,u) = ((-1)^{n-1}(a^T)^{-1},y^T,-u^T,-v^T)
            \]
            and $\alpha$ is given by the action of $(J,J)$, i.e.\ $\alpha(a,y,v,u) = (JaJ,JyJ,Jv,uJ)$.
        \end{rmk}

        \begin{rmk}
            In the case of the \emph{universal Coulomb branch} considered in \cite[Section 3.3]{ginzburg2025pointwisepurityderivedsatake}, where $\mathbf{N} = \C^n$ is the standard representation of $\GL_n$, the cotangent bundle $T^*\gl_n = \gl_n \times \gl_n$ is realized as the gluing of two copies of $T^*\GL_n$. The corresponding anti-symplectic involution of $T^*\gl_n$ is then $(a,b) \mapsto (-b^T,a^T)$. 
        \end{rmk}
        
        \printbibliography

@misc{ginzburg2025pointwisepurityderivedsatake,
      title={Pointwise purity, derived Satake, and Symplectic duality}, 
      author={Victor Ginzburg},
      year={2025},
      eprint={2508.15958},
      archivePrefix={arXiv},
      primaryClass={math.RT},
      url={https://arxiv.org/abs/2508.15958}, 
}

@misc{gannon2025functorialitycoulombbranches,
      title={Functoriality of Coulomb branches}, 
      author={Tom Gannon and Ben Webster},
      year={2025},
      eprint={2501.09962},
      archivePrefix={arXiv},
      primaryClass={math.AG},
      url={https://arxiv.org/abs/2501.09962}, 
}

@Article{Teleman2021,
  author    = {Teleman, Constantin},
  journal   = {Journal of the European Mathematical Society},
  title     = {The rôle of Coulomb branches in 2D gauge theory},
  year      = {2021},
  issn      = {1435-9863},
  month     = Jun,
  number    = {11},
  pages     = {3497--3520},
  volume    = {23},
  doi       = {10.4171/jems/1071},
  publisher = {European Mathematical Society - EMS - Publishing House GmbH},
}

@article{HELMKE19891039,
    title = {Bezoutians},
    journal = {Linear Algebra and its Applications},
    volume = {122-124},
    pages = {1039-1097},
    year = {1989},
    note = {Special Issue on Linear Systems and Control},
    issn = {0024-3795},
    doi = {https://doi.org/10.1016/0024-3795(89)90684-8},
    url = {https://www.sciencedirect.com/science/article/pii/0024379589906848},
    author = {U. Helmke and P.A. Fuhrmann}
    }

@misc{benzvi2024relativelanglandsduality,
      title={Relative Langlands Duality}, 
      author={David Ben-Zvi and Yiannis Sakellaridis and Akshay Venkatesh},
      year={2024},
      eprint={2409.04677},
      archivePrefix={arXiv},
      primaryClass={math.RT},
      url={https://arxiv.org/abs/2409.04677}, 
}

@Article{FengWang2025,
    author    = {Feng, Tony and Wang, Jonathan},
    journal   = {Geometric and Functional Analysis},
    title     = {Geometric Langlands Duality for Periods},
    year      = {2025},
    issn      = {1420-8970},
    month     = Feb,
    number    = {2},
    pages     = {463--541},
    volume    = {35},
    doi       = {10.1007/s00039-025-00702-4},
    publisher = {Springer Science and Business Media LLC},
}

@misc{nakajima2024sdualhamiltonianmathbfg,
      title={S-dual of Hamiltonian $\mathbf G$ spaces and relative Langlands duality}, 
      author={Hiraku Nakajima},
      year={2024},
      eprint={2409.06303},
      archivePrefix={arXiv},
      primaryClass={math.AG},
      url={https://arxiv.org/abs/2409.06303}, 
}

@Article{Takayama2016,
  author    = {Takayama, Yuuya},
  journal   = {Publications of the Research Institute for Mathematical Sciences},
  title     = {Nahm’s Equations, Quiver Varieties and Parabolic Sheaves},
  year      = {2016},
  issn      = {1663-4926},
  month     = Jan,
  number    = {1},
  pages     = {1--41},
  volume    = {52},
  doi       = {10.4171/prims/172},
  publisher = {European Mathematical Society - EMS - Publishing House GmbH},
}

@Article{NakajimaTakayama2017,
  author    = {Nakajima, Hiraku and Takayama, Yuuya},
  journal   = {Selecta Mathematica},
  title     = {Cherkis bow varieties and Coulomb branches of quiver gauge theories of affine type A},
  year      = {2017},
  issn      = {1420-9020},
  month     = Jun,
  number    = {4},
  pages     = {2553--2633},
  volume    = {23},
  doi       = {10.1007/s00029-017-0341-7},
  publisher = {Springer Science and Business Media LLC},
}

@article{Finkelberg_2025,
   title={Hyperspherical Equivariant Slices and Basic Classical Lie Superalgebras},
   volume={406},
   ISSN={1432-0916},
   url={http://dx.doi.org/10.1007/s00220-025-05378-3},
   DOI={10.1007/s00220-025-05378-3},
   number={8},
   journal={Communications in Mathematical Physics},
   publisher={Springer Science and Business Media LLC},
   author={Finkelberg, Michael and Ukraintsev, Ivan},
   year={2025},
   month=Jul }

@article{Bielawski1997,
author = {Bielawski, Roger},
title = {Hyperkähler Structures and Group Actions},
journal = {Journal of the London Mathematical Society},
volume = {55},
number = {2},
pages = {400-414},
doi = {10.1112/s0024610796004723},
url = {https://londmathsoc.onlinelibrary.wiley.com/doi/abs/10.1112/S0024610796004723},
year = {1997}
}

@article{Sylvester1853,
    author = {Sylvester, James Joseph},
    title = {XVIII. On a theory of the syzygetic relations of two rational integral functions, comprising an application to the theory of Sturm’s functions, and that of the greatest algebraical common measure},
    journal = {Philosophical Transactions of the Royal Society of London},
    number = {143},
    pages = {407-548},
    year = {1853},
    month = {12},
    issn = {0261-0523},
    doi = {10.1098/rstl.1853.0018},
    url = {https://doi.org/10.1098/rstl.1853.0018},
    eprint = {https://royalsocietypublishing.org/rstl/article-pdf/doi/10.1098/rstl.1853.0018/1454780/rstl.1853.0018.pdf},
}

@article{HeinigRost,
title = {On the inverses of Toeplitz-plus-Hankel matrices},
journal = {Linear Algebra and its Applications},
volume = {106},
pages = {39-52},
year = {1988},
issn = {0024-3795},
doi = {https://doi.org/10.1016/0024-3795(88)90021-3},
url = {https://www.sciencedirect.com/science/article/pii/0024379588900213},
author = {Georg Heinig and Karla Rost}
}

@Article{Hurtubise1989,
  author    = {Hurtubise, Jacques},
  journal   = {Communications In Mathematical Physics},
  title     = {The classification of monopoles for the classical groups},
  year      = {1989},
  issn      = {1432-0916},
  month     = Dec,
  number    = {4},
  pages     = {613--641},
  volume    = {120},
  doi       = {10.1007/bf01260389},
  publisher = {Springer Science and Business Media LLC},
}

@Article{BFN-Ring,
  author    = {Braverman, Alexander and Finkelberg, Michael and Nakajima, Hiraku},
  journal   = {Advances in Theoretical and Mathematical Physics},
  title     = {Ring objects in the equivariant derived Satake category arising from Coulomb branches},
  year      = {2019},
  issn      = {1095-0753},
  number    = {2},
  pages     = {253--344},
  volume    = {23},
  doi       = {10.4310/atmp.2019.v23.n2.a1},
  publisher = {International Press of Boston},
}

@Article{Lusztig1976,
  author    = {Lusztig, G.},
  journal   = {Inventiones Mathematicae},
  title     = {Coxeter orbits and eigenspaces of Frobenius},
  year      = {1976},
  issn      = {1432-1297},
  month     = Jun,
  number    = {2},
  pages     = {101--159},
  volume    = {38},
  doi       = {10.1007/bf01408569},
  publisher = {Springer Science and Business Media LLC},
}

@Article{Knop1996,
  author    = {Knop, Friedrich},
  journal   = {Journal of the American Mathematical Society},
  title     = {Automorphisms, root systems, and compactifications of homogeneous varieties},
  year      = {1996},
  issn      = {0894-0347},
  number    = {1},
  pages     = {153--174},
  volume    = {9},
  doi       = {10.1090/s0894-0347-96-00179-8},
  publisher = {American Mathematical Society (AMS)},
}

@article {NgoLeLemmeFondamentalPourLesAlgebresdeLie,
    AUTHOR = {Ng\^{o}, Bao Ch\^{a}u},
     TITLE = {Le lemme fondamental pour les alg\`ebres de {L}ie},
   JOURNAL = {Publ. Math. Inst. Hautes \'{E}tudes Sci.},
  FJOURNAL = {Publications Math\'{e}matiques. Institut de Hautes \'{E}tudes
              Scientifiques},
    NUMBER = {111},
      YEAR = {2010},
     PAGES = {1--169},
      ISSN = {0073-8301,1618-1913},
   MRCLASS = {22E35 (11S37 14D23 14G35 22E50)},
  MRNUMBER = {2653248},
MRREVIEWER = {R.\ P.\ Langlands},
       DOI = {10.1007/s10240-010-0026-7},
       URL = {https://doi.org/10.1007/s10240-010-0026-7},
}

@article {DonagiGaitsgoryTheGerbeofHiggsBundles,
    AUTHOR = {Donagi, R. Y. and Gaitsgory, D.},
     TITLE = {The gerbe of {H}iggs bundles},
   JOURNAL = {Transform. Groups},
  FJOURNAL = {Transformation Groups},
    VOLUME = {7},
      YEAR = {2002},
    NUMBER = {2},
     PAGES = {109--153},
      ISSN = {1083-4362,1531-586X},
   MRCLASS = {14J60 (14D21 14L30)},
  MRNUMBER = {1903115},
MRREVIEWER = {Adrian\ Langer},
       DOI = {10.1007/s00031-002-0008-z},
       URL = {https://doi.org/10.1007/s00031-002-0008-z},
}

@Article{Kac1980,
  author    = {Kac, V.G.},
  journal   = {Journal of Algebra},
  title     = {Some remarks on nilpotent orbits},
  year      = {1980},
  issn      = {0021-8693},
  month     = May,
  number    = {1},
  pages     = {190--213},
  volume    = {64},
  doi       = {10.1016/0021-8693(80)90141-6},
  publisher = {Elsevier BV},
}

@Article{Vinberg1978,
  author    = {Vinberg, É. B. and Kimel’fel’d, B. N.},
  journal   = {Functional Analysis and Its Applications},
  title     = {Homogeneous domains on flag manifolds and spherical subgroups of semisimple Lie groups},
  year      = {1978},
  issn      = {1573-8485},
  month     = Jul,
  number    = {3},
  pages     = {168--174},
  volume    = {12},
  doi       = {10.1007/bf01681428},
  publisher = {Springer Science and Business Media LLC},
}

@Article{Vinberg2001,
  author    = {Vinberg, E B},
  journal   = {Russian Mathematical Surveys},
  title     = {Commutative homogeneous spaces and co-isotropic symplectic actions},
  year      = {2001},
  issn      = {1468-4829},
  month     = Feb,
  number    = {1},
  pages     = {1--60},
  volume    = {56},
  doi       = {10.1070/rm2001v056n01abeh000356},
  publisher = {Steklov Mathematical Institute},
}

@Article{BFM,
  author    = {Bezrukavnikov, Roman and Finkelberg, Michael and Mirkovic, Ivan},
  journal   = {Compositio Mathematica},
  title     = {Equivariant homology and K-theory of affine Grassmannians and Toda lattices},
  year      = {2005},
  issn      = {1570-5846},
  month     = Apr,
  number    = {03},
  pages     = {746--768},
  volume    = {141},
  doi       = {10.1112/s0010437x04001228},
  publisher = {Cambridge University Press (CUP)},
}

@Article{Prasad2018,
  author    = {Prasad, Dipendra},
  journal   = {Transactions of the American Mathematical Society},
  title     = {Generalizing the MVW involution, and the contragredient},
  year      = {2018},
  issn      = {1088-6850},
  month     = Nov,
  number    = {1},
  pages     = {615--633},
  volume    = {372},
  doi       = {10.1090/tran/7602},
  publisher = {American Mathematical Society (AMS)},
}

@Article{MWZ26,
  author    = {Mao, Zhengyu and Wan, Chen and Zhang, Lei},
  journal   = {Forum of Mathematics, Sigma},
  title     = {Strongly tempered hyperspherical Hamiltonian spaces},
  year      = {2026},
  issn      = {2050-5094},
  volume    = {14},
  doi       = {10.1017/fms.2026.10190},
  publisher = {Cambridge University Press (CUP)},
}

@Article{BFN2018,
  author    = {Braverman, Alexander and Finkelberg, Michael and Nakajima, Hiraku},
  journal   = {Advances in Theoretical and Mathematical Physics},
  title     = {Towards a mathematical definition of Coulomb branches of $3$-dimensional $\mathcal{N} = 4$ gauge theories, II},
  year      = {2018},
  issn      = {1095-0753},
  number    = {5},
  pages     = {1071--1147},
  volume    = {22},
  doi       = {10.4310/atmp.2018.v22.n5.a1},
  publisher = {International Press of Boston},
}

@InBook{BF2019,
  author    = {Braverman, Alexander and Finkelberg, Michael},
  pages     = {1--52},
  publisher = {Springer International Publishing},
  title     = {Coulomb Branches of 3-Dimensional Gauge Theories and Related Structures},
  year      = {2019},
  isbn      = {9783030268565},
  booktitle = {Geometric Representation Theory and Gauge Theory},
  doi       = {10.1007/978-3-030-26856-5_1},
  issn      = {1617-9692},
}
        \nocite{*}
        \small{\noindent
        \textsc{Department of Mathematics, The University of Chicago} \\
        \textit{Email address:} \texttt{\href{mailto:brunodias@uchicago.edu}{brunodias@uchicago.edu}}}
\end{document}